\newcommand{\cqfd}{\hfill $\Box$}
\newtheorem{defi}{Définition}[section]
      \newtheorem{prop}[defi]{Proposition}
      \newtheorem{prop-def}[defi]{Proposition-Définition}
      \newtheorem{def-prop}[defi]{Définition-Proposition}
      \newtheorem{thm}[defi]{Théorème}
      \newtheorem{lem}[defi]{Lemme}
      \newtheorem{cor}[defi]{Corollaire}
\def\geod{{\text {géod}}}
\def\R{{\mathbf R}}
\def\Z{{\mathbf Z}}
\def\N{{\mathbf N}}
\title{Actions affines isométriques propres des groupes hyperboliques sur des espaces $\ell^{p}$}
\author{Aurélien Alvarez et Vincent Lafforgue}
\address{Aurélien Alvarez fait partie du projet ANR-14-CE25-0004 GAMME.\newline
MAPMO, UMR 7349, Université d'Orléans \newline Rue de Chartres, BP 6759 - 45067 Orléans cedex 2, France}
\address{Vincent Lafforgue fait partie du projet  ANR-14-CE25-0012 SINGSTAR.\newline
CNRS et Institut Fourier, UMR 5582, Université Grenoble Alpes \newline 100 rue des Maths, 38610 Gières, France}
\begin{document}

\maketitle

\begin{abstract}
Nous donnons une démonstration élémentaire et relativement courte du fait suivant : \textbf{tout groupe hyperbolique admet une action affine isométrique propre sur un espace $\ell^{p}$ pour $p$ suffisamment grand}.
Une première preuve de ce résultat a été donnée par Yu~\cite{yu}.
\end{abstract}


Les théorèmes de point fixe abondent dans la littérature mathématique et l'on sait combien ce sont des outils précieux ; citons par exemple le théorème de point fixe de Lefschetz en topologie algébrique qui permet de compter les points fixes d'une application continue d'un espace compact {\it via} l'action induite en homologie, le théorème du point fixe de Brouwer qui assure l'existence d'un point fixe pour toute application continue de la boule fermée, ou encore le non-moins célèbre théorème du point fixe de Banach aux innombrables applications, en particulier dans l'étude des équations différentielles.
Dans le contexte des actions de groupes, l'attention s'est d'abord portée sur les points fixes dans les espaces de Hilbert (avec la fameuse propriété (T) de Kazhdan \cite{kazhdan}) puis plus généralement dans les espaces de Banach.
Par exemple, dans \cite{bader-furman-gelander-monod}, Bader, Furman, Gelander et Monod démontrent que toute action affine isométrique d'un réseau de $\text{SL}(3,\R)$ sur~$\text{L}^p$ a un point fixe pour tout $1<p<\infty$, et un énoncé analogue a été démontré par V.~Lafforgue pour les actions affines à petite croissance exponentielle des sous-groupes co-compacts de $\text{SL}(3,\mathbf{Q}_p)$ sur des espaces de Banach de type > 1 \cite{renforcement-prop-T}, \cite{propTbanachique}. Voir également \cite{benben}, \cite{laat-salle1}, \cite{laat-salle2} pour d'autres généralisations.
Pour un panorama des développements récents concernant les propriétés de point fixe dans le cas des actions de groupe sur des espaces de Banach, on pourra se référer à l'excellent article \cite{nowak14} de Nowak.

De manière opposée en un certain sens, des groupes comme $\text{SO}(n,1)$ ou $\text{SU}(n,1)$, mais également leurs sous-groupes fermés, admettent des actions affines isométriques {\it propres} sur des espaces de Hilbert : on dit qu'ils ont la propriété de Haagerup.
Les groupes ayant cette propriété de Haagerup vérifient la conjecture de Baum-Connes à coefficients d'après Higson-Kasparov (voir par exemple \cite{valette} pour une introduction à la conjecture de Baum-Connes).

Les groupes hyperboliques se situent entre les deux catégories précédentes car certains ont la propriété de Haagerup mais d'autres la propriété (T).
Cependant tous admettent des actions affines isométriques propres sur des espaces $\ell^{p}$ ($1<p<\infty$).
Yu a d'abord démontré dans \cite{yu} que tout groupe hyperbolique admet une action affine propre sur un espace $\ell^{p}$ pour $p$ suffisamment grand, en s'appuyant sur des idées de Mineyev \cite{mineyev}.
Dans \cite{cornulier-tessera-valette}, Cornulier, Tessera et Valette démontrent que tout groupe de Lie $G$ simple de rang 1 (et de centre fini) possède une action affine propre sur $\text{L}^p(G)$ dont la partie linéaire est la représentation régulière, quel que soit le réel~$p$ strictement supérieur à la dimension conforme du bord de l'espace riemannien symétrique associé.
Nowak quant à lui démontre qu'un groupe a la propriété de Haagerup si et seulement s'il admet une action affine propre sur un espace de Banach $\text{L}^p([0,1])$ pour $1<p<2$ \cite{nowak09}.

Concernant les groupes hyperboliques, Bourdon prouve dans \cite{bourdon} qu'un tel groupe $G$ possède une action affine propre sur $\ell^p$ de la réunion disjointe d'un nombre fini de copies de $G$, pour tout $p$ strictement plus grand que la dimension conforme du bord de $G$.
Bourdon donne de plus dans \cite{bourdon-2} une minoration de la dimension conforme du bord à l'infini de certains groupes hyperboliques.
En utilisant les résultats de \cite{bourdon-pajot}, Nica démontre quant à lui qu'un groupe hyperbolique~$G$ admet une action affine propre sur $\text{L}^p(\partial G \times \partial G)$, où $\partial G$ désigne le bord de~G, pour $p$ assez grand.
En outre $G$ admet une action affine propre sur la 1-cohomologie-$\text{L}^p$ de~$G$ \cite{nica}.

\vspace{0.2cm}

Dans cet article, nous donnons une nouvelle démonstration du théorème de Yu \cite{yu}, comme conséquence du théorème~\ref{enonce-bon}.
La stratégie générale de la preuve est assez proche de celle de Yu mais notre démonstration est élémentaire et auto-contenue, alors que la preuve de Yu repose sur des travaux de Mineyev \cite{mineyev}.
Plus généralement on démontre le théorème~\ref{enonce-ppal} dont on ramène la démonstration à celle du théorème~\ref{enonce-bon}.

\tableofcontents

Afin de rendre cet article le plus largement accessible, nous avons rappelé certaines définitions à propos des groupes hyperboliques et donné quelques exemples importants.
Nous remercions Alain Valette et Yves Stalder pour leurs conseils et de nombreuses références.


\section{Actions affines isométriques }

Soit $\mathcal{E}$ un espace de Banach affine réel, c'est-à-dire la donnée d'un espace de Banach réel $\mathcal{E}^{\circ}$ opérant simplement transitivement dans un ensemble $\mathcal{E}$.
La donnée d'une origine $\xi_{\circ}$ dans $\mathcal{E}$ permet d'identifier $\mathcal{E}$ à $\mathcal{E}^{\circ}$, ainsi que le groupe $\text{Isom}(\mathcal{E})$ des isométries affines de~$\mathcal{E}$ à $\text{Isom}(\mathcal{E}^{\circ}) \ltimes \mathcal{E}^{\circ}$, où $\text{Isom}(\mathcal{E}^{\circ})$ désigne le groupe des isométries vectorielles de $\mathcal{E}^{\circ}$.

Une action {\it affine} (isométrique) d'un groupe topologique $G$ sur $\mathcal{E}$ est un homomorphisme de groupes $\alpha : G \longrightarrow \text{Isom}(\mathcal{E})$ tel que, pour tout $\xi$ de $\mathcal{E}$,
$$g \longmapsto \alpha(g)\cdot \xi$$
\noindent
est continue.
L'identification précédente permet d'écrire $\alpha(g)(\cdot) = \pi(g)(\cdot) + c(g)$ pour tout~$g$ de~$G$, où $\pi$ est une représentation isométrique de $G$ dans l'espace de Banach~$\mathcal{E}^{\circ}$ et $c : G \longrightarrow \mathcal{E}^{\circ}$ un {\it cocycle} continu dans le sens suivant : pour tous $g_1, g_2$ de $G$, on a
$$c(g_1 g_2) = c(g_1) + \pi(g_1) \cdot c(g_2).$$

Pour tout point $\xi$ de l'espace affine $\mathcal{E}$, il existe un unique vecteur $\xi^{\circ}$ de $\mathcal{E}^{\circ}$ tel que $\xi = \xi_{\circ} + \xi^{\circ}$.
Si $\eta^{\circ}$ désigne un vecteur quelconque de $\mathcal{E}^{\circ}$, on a alors
$$\begin{array}{rcl}
\alpha(g) \cdot \xi & = & \pi(g) \cdot \xi^{\circ} + c(g) + \xi_{\circ}\\
& = & \pi(g) \cdot ({\xi}^{\circ}+\eta^{\circ}) - \pi(g) \cdot \eta^{\circ} + c(g) + \eta^{\circ} + (\xi_{\circ} - \eta^{\circ}),\\
\end{array}$$

\noindent
autrement dit $\alpha(g)(\cdot) = \pi(g)(\cdot) + c(g) + b(g)$, où $b(g) = \eta^{\circ} - \pi(g) \cdot \eta^{\circ}$ est un {\it cobord}, c'est-à-dire un cocycle de la forme $h \longmapsto \eta^{\circ} - \pi(h) \cdot \eta^{\circ}$ pour un certain vecteur~$\eta^{\circ}$ de $\mathcal{E}^{\circ}$.
Ainsi, changer d'origine dans $\mathcal{E}$ revient à modifier le cocycle de départ en lui additionnant un cobord.
On en déduit donc que l'espace vectoriel quotient $\text{H}^1(G,\pi)$ des cocycles modulo les cobords classifie les actions affines de $G$ de partie linéaire~$\pi$, à isomorphisme près.
En particulier, une action affine $\alpha = (\pi,c)$ a un point fixe dans $\mathcal{E}$ si et seulement si le cocycle $c$ est un cobord.
Remarquons également que l'inégalité triangulaire entraîne immédiatement que tout cobord est borné (par deux fois la norme du vecteur $\eta^{\circ}$ avec les notations précédentes).

L'action affine $\alpha = (\pi,c)$ de $G$ sur $\mathcal{E}$ est dite {\it propre} si le cocycle $c$ est propre, c'est-à-dire si $\lim_{g \to \infty} \| c(g) \| = \infty$.
C'est la situation qui va nous intéresser dans cet article.

\vspace{0.2cm}

\noindent
\textbf{Une méthode utile pour construire des actions affines.}
Soit $(\mathcal{E}^{\circ},\| \cdot \|)$ un espace de Banach et $\pi$ une action linéaire isométrique d'un groupe topologique $G$ sur $\mathcal{E}^{\circ}$.
Supposons donné un espace vectoriel $V$, un plongement d'espace vectoriel de $\mathcal{E}^{\circ}$ dans~$V$ et une action linéaire de $G$ sur $V$ qui stabilise $\mathcal{E}^{\circ}$ et dont la restriction à $\mathcal{E}^{\circ}$ est~$\pi$.
Si $\xi_{\circ}$ est un élément de $V$ tel que, pour tout $g$ de $G$, le vecteur $\xi_{\circ} - g \cdot \xi_{\circ}$ appartient au sous-espace $\mathcal{E}^{\circ}$ et $g \longmapsto \xi_{\circ} - g \cdot \xi_{\circ}$ est continue de $G$ dans $\mathcal{E}^{\circ}$, alors l'action linéaire de $G$ sur $V$ se restreint en une action affine de $G$ sur l'espace de Banach affine $\mathcal{E} = \xi_{\circ} + \mathcal{E}^{\circ}$.
Cette dernière est aussi donnée par le cocycle $g \longmapsto \xi_{\circ} - g \cdot \xi_{\circ}$.
Par définition, l'action est propre si et seulement si $\lim_{g \to \infty} \| \xi_{\circ} - g \cdot \xi_{\circ} \| = \infty$.

Remarquons enfin que l'inégalité triangulaire interdit à $\xi_{\circ}$ d'appartenir à $\mathcal{E}^{\circ}$ dès que la borne supérieure de l'ensemble des $\| \xi_{\circ} - g \cdot \xi_{\circ} \|$ pour $g$ dans $G$ est infinie.

\vspace{0.2cm}

\noindent
\textbf{Exemple :} Supposons $G$ discret de type fini ; désignons par~$\mathcal{E}^{\circ}$ l'espace de Banach $\ell^\infty(G)$, par $V$ l'espace vectoriel des fonctions réelles sur $G$ et par~$\xi_{\circ}$ la fonction $d(1,\cdot)$, où $d$ désigne la métrique des mots associée à un système fini de générateurs de $G$.
La métrique $d$ étant invariante à gauche, pour tout $g$ de $G$, la fonction $c(g) = \xi_{\circ} - g \cdot \xi_{\circ}$ se réécrit
$$c(g) = d(1,\cdot) - d(1,g^{-1} \cdot) = d(1,\cdot) - d(g,\cdot).$$
De l'inégalité triangulaire, on déduit immédiatement que la fonction $c(g)$ est bornée sur~$G$ par $d(1,g)$ : la méthode précédente s'applique et donne une action affine de~$G$ sur l'espace de Banach affine $d(1,\cdot)+\ell^\infty(G)$.
Par ailleurs on calcule facilement que la norme infinie $\| c(g) \|_{\infty}$ de $c(g)$ est exactement égale à $d(1,g)$, ce qui permet de conclure que l'action est propre.


\section{Petit détour dans les arbres}

Soit $X$ un arbre\footnote{On désignera tout aussi bien par $X$ l'arbre que l'ensemble de ses sommets.} (le lecteur ne perdra rien à penser à l'arbre de Cayley d'un groupe libre sur deux générateurs). Étant donné deux sommets $x$ et $a$ de $X$, on considère la mesure de probabilité \textit{masse de Dirac} $\mu_x(a)$ portée par le sommet voisin\footnote{comprendre ici à distance 1 si $a$ et $x$ sont distincts et à distance 0 sinon} de~$a$ le plus proche de~$x$.


\begin{center}
\definecolor{qqffqq}{rgb}{0.,1.,0.}
\definecolor{qqqqff}{rgb}{0.,0.,1.}
\begin{tikzpicture}[line cap=round,line join=round,>=triangle 45,x=1.0cm,y=1.0cm,scale=0.4]
\clip(-6.,-6.) rectangle (6.,6.);
\draw [color=qqffqq,fill=qqffqq,fill opacity=1.0] (0.,-3.) circle (0.42cm);
\draw [line width=2.8pt] (0.,0.)-- (3.,0.);
\draw (0.,0.)-- (-3.,0.);
\draw (0.,0.)-- (0.,3.);
\draw [line width=2.8pt] (0.,0.)-- (0.,-3.);
\draw [line width=2.8pt] (3.,0.)-- (3.,1.5);
\draw (3.,0.)-- (3.,-1.5);
\draw (3.,1.5)-- (4.,1.5);
\draw (3.,1.5)-- (2.,1.5);
\draw (3.,1.5)-- (3.,2.5);
\draw (3.,-1.5)-- (4.,-1.5);
\draw (3.,-1.5)-- (3.,-2.5);
\draw (3.,-1.5)-- (2.,-1.5);
\draw (3.,0.)-- (4.5,0.);
\draw (4.5,0.)-- (5.5,0.);
\draw (4.5,0.)-- (4.5,-1.);
\draw (4.5,0.)-- (4.5,1.);
\draw (-3.,0.)-- (-3.,-1.5);
\draw (-3.,-1.5)-- (-4.,-1.5);
\draw (-3.,-1.5)-- (-3.,-2.5);
\draw (-3.,-1.5)-- (-2.,-1.5);
\draw (-3.,1.5)-- (-2.,1.5);
\draw (-3.,0.)-- (-3.,1.5);
\draw (-3.,1.5)-- (-4.,1.5);
\draw (-3.,1.5)-- (-3.,2.5);
\draw (-3.,0.)-- (-4.5,0.);
\draw (-4.5,0.)-- (-4.5,1.);
\draw (-4.5,0.)-- (-4.5,-1.);
\draw (-4.5,0.)-- (-5.5,0.);
\draw (0.,3.)-- (0.,4.5);
\draw (0.,4.5)-- (1.,4.5);
\draw (0.,4.5)-- (-1.,4.5);
\draw (0.,4.5)-- (0.,5.5);
\draw (-1.5,3.)-- (-1.5,4.);
\draw (-1.5,3.)-- (-1.5,2.);
\draw (-1.5,3.)-- (-2.5,3.);
\draw (0.,3.)-- (-1.5,3.);
\draw (1.5,3.)-- (1.5,2.);
\draw (1.5,3.)-- (1.5,4.);
\draw (1.5,3.)-- (2.5,3.);
\draw (0.,3.)-- (1.5,3.);
\draw (0.,-3.)-- (0.,-4.5);
\draw (0.,-4.5)-- (-1.,-4.5);
\draw (0.,-4.5)-- (0.,-5.5);
\draw (0.,-4.5)-- (1.,-4.5);
\draw (0.,-3.)-- (1.5,-3.);
\draw (1.5,-3.)-- (1.5,-4.);
\draw (1.5,-3.)-- (1.5,-2.);
\draw (1.5,-3.)-- (2.5,-3.);
\draw [line width=2.8pt] (0.,-3.)-- (-1.5,-3.);
\draw (-1.5,-3.)-- (-1.5,-2.);
\draw (-1.5,-3.)-- (-2.5,-3.);
\draw (-1.5,-3.)-- (-1.5,-4.);
\draw (3.05,2.3) node[anchor=north west] {$x$};
\draw (-1.5,-3) node[anchor=north west] {$a$};
\draw (-0.2,-3.2) node[anchor=north west] {$\mu_x(a)$};
\begin{scriptsize}
\draw [fill=qqqqff] (3.,1.5) circle (6pt);
\draw [fill=qqqqff] (-1.5,-3.) circle (6pt);
\end{scriptsize}
\end{tikzpicture}
\end{center}

L'application $\mu$ de $X \times X$ dans l'espace $\mathcal{P}(X)$ des mesures de probabilité sur $X$ ainsi définie est telle que, pour tout $a$ et tous $x, x'$ distincts de $X$ :

\vspace{0.1cm}

\begin{enumerate}
\renewcommand{\theenumi}{\roman{enumi}}
\item le support de $\mu_x(a)$ est contenu dans la boule fermée centrée en $a$ et de rayon 1 (et même sur la sphère centrée en $a$ et de rayon 1 dès que $a$ et $x$ sont distincts) ;
\item $\| \mu_x(a) - \mu_{x'}(a) \|_1 = 2$ si $a$ est sur la géodésique de $x$ à $x'$, et 0 sinon ;
\item les supports de $\mu_x(a)$ et $\mu_{x'}(a)$ sont disjoints si et seulement si $a$ est sur la géodésique de $x$ à $x'$.
\end{enumerate}

Les trois items sont immédiats à vérifier.

\vspace{0.1cm}

Soit $G$ un groupe opérant dans l'arbre $X$ si bien que l'application $\mu$ est de fait $G$-équivariante.
Soit $o$ un sommet de $X$.
Notons $X^{\leq 1}$ l'ensemble des couples de sommets de $X$ à distance 0 ou 1 l'un de l'autre\footnote{L'ensemble $X^{\leq 1}$ s'identifie naturellement à la réunion disjointe des sommets et des arêtes (orientées) de $X$.}, $\mathcal{E}^{\circ}$ l'espace de Banach $\ell^p(X^{\leq 1})$ ($p \in [1,\infty[$), $V$ l'espace vectoriel des fonctions sur $X^{\leq 1}$ et $\xi_{\circ} : (x,y) \longmapsto \mu_o(x)(\{y\})$ une fonction sur $X \times X$ que l'on ne perd rien à considérer comme un élément de~$V$ d'après l'item (i) ; notons enfin, pour tout $g$ de $G$, $c(g) = \xi_{\circ} - g \cdot \xi_{\circ}$.
En utilisant la $G$-équivariance de $\mu$, la fonction $c(g)$ se réécrit $c(g) = \mu_o - \mu_{g \cdot o}$.
L'item (ii) ci-dessus assure que la fonction $c(g)$ est nulle en dehors du segment géodésique de~$o$ à $g \cdot o$.
Bien entendu, si $o$ est un point fixe de $g$, la fonction $c(g)$ est nulle.
Dans le cas contraire, on en déduit facilement que $\| c(g) \|_p = 2 (d(o,g \cdot o)+1)^{1/p}$.
Si de plus l'action de $G$ sur $X$ est propre ({\it i.e.} la distance $d(o,g \cdot o)$ dans l'arbre tend vers l'infini quand $g$ tend vers l'infini), on en déduit alors une action affine propre de~$G$ sur l'espace de Banach affine $\mu_o+\ell^p(X^{\leq 1})$.

\vspace{0.2cm}

Nous allons voir l'analogue de la construction précédente de l'application $\mu$ dans le monde \og continu \fg\ et conforter ainsi notre intuition géométrique de la situation dans le cas des espaces euclidiens et hyperboliques.

\vspace{0.1cm}


\begin{center}
\definecolor{qqqqff}{rgb}{0.,0.,1.}
\begin{tikzpicture}[line cap=round,line join=round,>=triangle 45,x=1.0cm,y=1.0cm,scale=1.8]
\clip(-1.2,-1.2) rectangle (1.2,1.2);
\draw(0.,0.) circle (1.cm);
\draw [shift={(0.40986320234477586,1.5211947052124717)}] plot[domain=3.7615291315932247:5.136878399080434,variable=\t]({1.*1.217382920778328*cos(\t r)+0.*1.217382920778328*sin(\t r)},{0.*1.217382920778328*cos(\t r)+1.*1.217382920778328*sin(\t r)});
\draw [->] (-0.41112834707886825,0.6223098118152487) -- (-0.16714946259663577,0.3994730460968994);
\draw (-0.37,0.43) node[anchor=north west] {$\mu_x(a)$};
\draw (-0.51,0.59) node[anchor=north west] {$a$};
\draw (0.4,0.27) node[anchor=north west] {$x$};
\begin{scriptsize}
\draw [fill=qqqqff] (-0.41112834707886825,0.6223098118152487) circle (1.3pt);
\draw [fill=qqqqff] (0.48818738430852365,0.30633401430075957) circle (1.3pt);
\end{scriptsize}
\end{tikzpicture}
\end{center}

\noindent
Désignons d'abord par $X$ une variété riemannienne simplement connexe à courbure négative ou nulle et par $\Gamma(X)$ l'espace vectoriel des champs de vecteurs sur~$X$.
Notons $\mathcal{E}^{\circ}$ l'espace de Banach des classes (modulo égalité presque partout) de champs de vecteurs $p$-intégrables ($p \in [1,\infty[$) et $V$ l'espace vectoriel des classes de champs de vecteurs.
On définit une application $\mu : X \longrightarrow \Gamma(X)$ en considérant, pour tout~$x$ de $X$ le champ géodésique $\mu_x = -\text{grad} \ d(x,\cdot)$ qui pointe dans la direction de~$x$ (par construction, $a \longmapsto \mu_x(a) \in \text{T}_a X$ est un champ de vecteurs singulier en~$x$).

Fixons $o$ un point de $X$ ; notons alors $\xi_{\circ} = \mu_o$ et, pour tout $g$ dans le groupe d'isométries $G$ de $X$, $c(g) = \xi_{\circ} - g \cdot \xi_{\circ}$.
L'application $(x,a) \longmapsto \mu_x(a)$ est $G$-équivariante par construction, donc $c(g) = \mu_o - \mu_{g \cdot o}$.
L'action de $G$ sur $X$ étant propre, pour pouvoir conclure comme expliqué dans la méthode du paragraphe~1, il reste donc à voir que, pour tous $x,x'$ de $X$, le champ de vecteurs $\mu_x - \mu_{x'}$ est $\text{L}^p$-intégrable pour~$p$ suffisamment grand et de norme $\text{L}^p$ tendant vers l'infini quand la distance de $x$ à $x'$ tend vers l'infini.

\noindent
\textbf{Cas euclidien :}
Prenons comme origine de l'espace euclidien $\mathbf{R}^n$ le milieu du segment $[x,x']$.
Pour un point $a$ de $\mathbf{R}^n$ à une distance $r$ donnée de l'origine, c'est sur l'hyperplan médiateur de $[x,x']$ que l'angle $\measuredangle xax'$ est maximal, de mesure $2 \arctan(d(x,x')/{2r}) \sim d(x,x')/r$ quand $r$ tend vers l'infini.
L'élément de volume étant proportionnel à $r^{n-1} dr d\theta$, où $\theta$ désigne une coordonnée angulaire ($\theta \in \mathbf{S}^{n-1}$), on en déduit l'intégrabilité dès que $(n-1)-p < -1$, {\it i.e.} $p>n$.

\noindent
\textbf{Cas hyperbolique :}
On utilise le même argument que précédemment.
Sur l'hyperplan médiateur de $[x,x']$ dans l'espace hyperbolique réel $\mathcal{H}^n_{\mathbf{R}}$, à une constante multiplicative près, l'angle $\measuredangle xax'$ est équivalent à $\sinh(r)^{-1}$.
L'élément de volume étant proportionnel à $\sinh(r)^{n-1} dr d\theta$, où $\theta$ désigne une coordonnée angulaire dans l'espace tangent au point en question, on en déduit l'intégrabilité dès que $(n-1)-p < 0$, {\it i.e.} $p>n-1$.
En particulier, on remarque que la condition est satisfaite pour tout $p > 1$ dans le cas du plan hyperbolique.


\section{Espaces hyperboliques : définitions et lemmes}\label{definitions-lemmes}

Nous rappelons quelques définitions à propos des espaces hyperboliques et démontrons des lemmes qui nous seront utiles par la suite.
Deux références classiques sur le sujet sont \cite{delzanthyper} et \cite{harpehyper}.

\begin{defi}
Soit $\delta$ un réel positif\footnote{Dans tout ce texte, {\it positif} signifie {\it positif ou nul}, de même que {\it supérieur} signifie {\it supérieur ou égal}. Mêmes conventions pour {\it inférieur} et {\it négatif}.}.
Un espace métrique $(X,d)$ est dit $\delta$-hyperbolique si, pour tout quadruplet $(x_1,x_2,x_3,x_4)$ de points de $X$, on a
$$d(x_1,x_4)+d(x_2,x_3) \leq \max \big( d(x_1,x_2)+d(x_3,x_4),d(x_1,x_3)+d(x_2,x_4) \big) + \delta.$$
\end{defi}

Nous dirons qu'un espace métrique $(X,d)$ est {\it hyperbolique} s'il existe un réel~$\delta$ positif tel que $(X,d)$ soit $\delta$-hyperbolique.


\begin{center}
\definecolor{qqqqff}{rgb}{0.,0.,1.}
\begin{tikzpicture}[line cap=round,line join=round,>=triangle 45,x=1.0cm,y=1.0cm,scale=0.8]
\clip(4.5,1.5) rectangle (11.,5.5);
\draw [samples=50,domain=-0.99:0.99,rotate around={90.4275725068334:(8.540896221273629,3.6190270303925853)},xshift=8.540896221273629cm,yshift=3.6190270303925853cm] plot ({0.4592973647674953*(1+(\x)^2)/(1-(\x)^2)},{1.1844767895219772*2*(\x)/(1-(\x)^2)});
\draw [samples=50,domain=-0.99:0.99,rotate around={90.4275725068334:(8.540896221273629,3.6190270303925853)},xshift=8.540896221273629cm,yshift=3.6190270303925853cm] plot ({0.4592973647674953*(-1-(\x)^2)/(1-(\x)^2)},{1.1844767895219772*(-2)*(\x)/(1-(\x)^2)});
\draw [samples=50,domain=-0.99:0.99,rotate around={0.:(7.768641750508568,4.914512306446116)},xshift=7.768641750508568cm,yshift=4.914512306446116cm] plot ({1.5490446355363514*(1+(\x)^2)/(1-(\x)^2)},{2.001942508980508*2*(\x)/(1-(\x)^2)});
\draw [samples=50,domain=-0.99:0.99,rotate around={0.:(7.768641750508568,4.914512306446116)},xshift=7.768641750508568cm,yshift=4.914512306446116cm] plot ({1.5490446355363514*(-1-(\x)^2)/(1-(\x)^2)},{2.001942508980508*(-2)*(\x)/(1-(\x)^2)});
\draw [dash pattern=on 2pt off 2pt] (5.159088089519433,2.2004592116979262)-- (9.414129175066414,4.197170444915182);
\draw [dash pattern=on 2pt off 2pt] (6.202411125823259,4.61547653075052)-- (9.958003017110203,2.9149806365502178);
\draw (6.2,5.1) node[anchor=north west] {$x_1$};
\draw (9.3,4.8) node[anchor=north west] {$x_2$};
\draw (5.1,2.2) node[anchor=north west] {$x_3$};
\draw (9.5,2.8) node[anchor=north west] {$x_4$};
\begin{scriptsize}
\draw [fill=qqqqff] (9.414129175066414,4.197170444915182) circle (3pt);
\draw [fill=qqqqff] (5.159088089519433,2.2004592116979262) circle (3pt);
\draw [fill=qqqqff] (6.202411125823259,4.61547653075052) circle (3pt);
\draw [fill=qqqqff] (9.958003017110203,2.9149806365502178) circle (3pt);
\end{scriptsize}
\end{tikzpicture}
\end{center}

Géométriquement, la définition d'hyperbolicité signifie que la somme des longueurs des diagonales d'un quadrilatère n'excède jamais le maximum de la somme des longueurs des côtés opposés, à une constante $\delta$ donnée près.
Les arbres et les espaces hyperboliques $\mathcal{H}^n$ sont les prototypes mêmes d'espaces hyperboliques au sens de la définition ci-dessus.
Le théorème de Pythagore assure que la diagonale d'un carré de côté $\ell$ est de longueur $\sqrt 2 \cdot \ell$ : un plan euclidien n'est donc pas hyperbolique.

\vspace{0.2cm}

Une fonction $f$ d'un espace métrique $X$ dans un espace métrique $X'$ est une {\it quasi-isométrie} si
\begin{enumerate}
\item la distance entre les images de deux points de $X$ est encadrée par les fonctions affines $t \longmapsto \alpha^{-1} \cdot t - \beta$ et $t \longmapsto \alpha \cdot t + \beta$ ($\alpha \geq 1$ et $\beta \geq 0$) de la distance dans~$X$ entre les deux points ;
\item tout point de $X'$ est à une distance de $f(X)$ inférieure à une constante donnée.
\end{enumerate}

Deux espaces métriques $X$ et $X'$ sont {\it quasi-isométriques} s'il existe une quasi-isométrie de $X$ dans $X'$.
C'est un fait fondamental (mais non trivial) : l'hyperbolicité est un invariant de quasi-isométrie pour les espaces (faiblement) géodésiques (voir par exemple \cite{harpehyper}, thm.~29, chap.~1), ce qui assure le bien-fondé de la définition suivante.

\begin{defi}
Un groupe $\Gamma$ de type fini est hyperbolique si, pour un (ou pour tout) système fini de générateurs, son graphe de Cayley est un espace hyperbolique pour la distance des mots.
\end{defi}

C'est à Gromov, à la suite de Dehn, Mostow, Thurston, Cannon, Rips que l'on doit l'idée d'étudier les groupes hyperboliques dans le cadre d'une définition aussi générale que celle-ci \cite{gromov}. Parmi les exemples fondamentaux de groupes hyperboliques, mentionnons les groupes libres de rang fini et les groupes de surface de genre au moins 2.

\vspace{0.2cm}

Rappelons quelques notations et définitions classiques dans les espaces métriques.
Soit $(X,d)$ un espace métrique.

\begin{itemize}
\renewcommand{\labelitemi}{$\bullet$}

\item Si $x$ est un point de $X$, $r$ un réel positif, on note respectivement $B(x,r)=\{y\in X ; d(x,y)\leq r\}$ et $S(x,r)=\{y\in X ; d(x,y)= r\}$ la boule fermée et la sphère de centre $x$ et de rayon~$r$.
L'espace $(X,d)$ est dit {\it uniformément localement fini} si, pour tout réel positif $r$, il existe un entier $K$ tel que, pour tout $x$ de $X$, la boule $B(x,r)$ contient au plus $K$ points.

\item Pour tout entier naturel $R$, on note $X^{\leq R}$ l'ensemble $\{(x,y)\in X \times X ; d(x,y)\leq R\}$ des couples de points à distance au plus $R$ l'un de l'autre.

\item Si $\epsilon$ est un réel positif et $x,y$ sont deux points de $X$, on désigne par $\epsilon\text{-}\geod(x,y)$ l'ensemble des points $z$ de $X$ tels que
$$d(x,z)+d(z,y) \leq d(x,y)+\epsilon.$$
Pour simplifier la notation, on note $\geod(x,y)$ au lieu de $0\text{-}\geod(x,y)$ l'ensemble des points $z$ de $X$ tels que $d(x,z)+d(z,y)=d(x,y)$.

\item Soit $\delta$ un réel positif.
L'espace $(X,d)$ est dit {\it faiblement $\delta$-géodésique} si, pour tous points $x,y$ de $X$ et pour
tout réel $s$ de $[0,d(x,y)+\delta]$, il existe un élément~$z$ de~$X$ tel que $d(x,z)\leq s$ et $d(z,y)\leq d(x,y)-s+\delta$.
L'espace $(X,d)$ est dit {\it faiblement géodésique} dès lors qu'il existe un réel positif $\delta$ tel que $(X,d)$ est faiblement $\delta$-géodésique.

\end{itemize}

\vspace{0.2cm}

Nous énonçons maintenant trois lemmes que nous utiliserons dans la démonstration du théorème principal.
Pour cela, désignons par $(X,d)$ un espace $\delta$-hyperbolique. 

\begin{lem}\label{sphere-alpha-geodesique}
Soit $\alpha$ un réel positif et $a,x,y,y'$ quatre points de $X$. Si $y$ et $y'$ appartiennent à la même sphère de centre $a$ ($d(a,y)=d(a,y')$) et à $\alpha\text{-}\geod(a,x)$, alors $d(y,y')\leq \alpha+\delta$. 
\end{lem}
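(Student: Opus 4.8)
The plan is to apply the $\delta$-hyperbolicity inequality to a well-chosen quadruplet built from the four points $a,x,y,y'$. The natural choice is $(x_1,x_2,x_3,x_4) = (y,a,y',x)$ (or some permutation that puts $y$ and $y'$ in "opposite corners" so that $d(y,y')$ appears among the two quantities being compared). With this choice the hyperbolicity inequality reads
$$
d(y,x) + d(a,y') \leq \max\bigl( d(y,a)+d(y',x),\ d(y,y')+d(a,x) \bigr) + \delta.
$$
Now I would use the two hypotheses to control the left-hand side from below. Since $y,y' \in \alpha\text{-}\geod(a,x)$, we have $d(a,y)+d(y,x) \leq d(a,x)+\alpha$ and similarly for $y'$; and since $d(a,y)=d(a,y')$, subtracting gives that $d(y,x)$ and $d(y',x)$ differ by at most $\alpha$ — more precisely $d(y,x) \geq d(a,x) - d(a,y) \ge$ (something close to $d(y',x)$). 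The point is to show that the first term in the $\max$, namely $d(y,a)+d(y',x)$, cannot be the one that realizes the maximum, or if it is, it still forces $d(y,y')$ small.

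Concretely, I expect the cleanest route is: from $y' \in \alpha\text{-}\geod(a,x)$ and $d(a,y')=d(a,y)$ we get $d(y',x) \leq d(a,x) - d(a,y) + \alpha$, hence
$$
d(y,a) + d(y',x) \leq d(a,x) + \alpha = d(a,x) + \alpha,
$$
while trivially $d(y,x) + d(a,y') \geq d(a,y') + d(a,y) - 2d(a,y)+\dots$ — here I need to be slightly careful, so instead I bound $d(y,x) \geq d(a,x) - d(a,y)$ by the triangle inequality and $d(a,y') = d(a,y)$, giving $d(y,x) + d(a,y') \geq d(a,x)$. Plugging into the hyperbolicity inequality,
$$
d(a,x) \leq \max\bigl( d(a,x)+\alpha,\ d(y,y')+d(a,x)\bigr) + \delta,
$$
which is useless as stated — so the real work is to arrange the quadruplet and the bookkeeping so that the term $d(a,x)+\alpha$ appears on the \emph{left} and $d(y,y')$ on the right. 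The correct assignment should be $(x_1,x_2,x_3,x_4)=(a,y,y',x)$ or $(y,a,x,y')$; I would simply write out all essentially-distinct pairings and pick the one where the inequality becomes $d(y,y') + (\text{big term}) \leq \max(\dots) + \delta$ with the big term cancelling.

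The main obstacle — and it is a minor one — is purely combinatorial: choosing the right labelling of the four points in the hyperbolicity axiom so that $d(y,y')$ ends up on the small side of a comparison and the large common distance $d(a,x)$ (or $d(a,y)+d(a,x)$) cancels from both sides, leaving $d(y,y') \leq \alpha + \delta$. Once the labelling is fixed, the two facts needed are (1) the triangle inequality, and (2) the definition of $\alpha\text{-}\geod$ together with $d(a,y)=d(a,y')$, which together pin down $d(y,x)$ and $d(y',x)$ to within $\alpha$ of the value $d(a,x)-d(a,y)$. I do not anticipate any genuine difficulty beyond this; the lemma is essentially the statement that points on a common sphere around $a$ that are also quasi-on a geodesic from $a$ to $x$ must be quasi-at the same place, which is exactly what one coordinatisation of the four-point condition yields directly.
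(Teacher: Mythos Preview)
Your approach is the same as the paper's, and you even name the right quadruplet $(a,y,y',x)$; what is missing is simply committing to it and writing the two-line computation instead of exploring. With $(x_1,x_2,x_3,x_4)=(a,y,y',x)$ the hyperbolicity inequality reads
\[
d(a,x)+d(y,y')\le \max\bigl(d(a,y)+d(y',x),\ d(a,y')+d(y,x)\bigr)+\delta,
\]
and since $d(a,y)=d(a,y')$, each term in the $\max$ equals $d(a,y')+d(y',x)$ or $d(a,y)+d(y,x)$, both $\le d(a,x)+\alpha$ by the $\alpha$-geodesic hypothesis; subtracting $d(a,x)$ finishes. Your first choice $(y,a,y',x)$ put $d(y,y')$ inside the $\max$ rather than on the left, which is why it stalled; there is no further obstacle.
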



\begin{center}
\definecolor{xdxdff}{rgb}{0.49019607843137253,0.49019607843137253,1.}
\definecolor{qqqqff}{rgb}{0.,0.,1.}
\begin{tikzpicture}[line cap=round,line join=round,>=triangle 45,x=1.0cm,y=1.0cm,scale=0.5]
\clip(5.5,4.) rectangle (16.,12.);
\draw [shift={(7.57,5.82)}] plot[domain=-1.0092391694487066:2.132353484141087,variable=\t]({1.*4.30043021103703*cos(\t r)+0.*4.30043021103703*sin(\t r)},{0.*4.30043021103703*cos(\t r)+1.*4.30043021103703*sin(\t r)});
\draw [rotate around={32.4034267641815:(10.855,7.935)},dash pattern=on 3pt off 3pt] (10.855,7.935) ellipse (4.777194093289525cm and 2.7718465695201133cm);
\draw [dash pattern=on 3pt off 3pt] (7.57,5.85)-- (14.14,10.02);
\draw [->] (10.360891668884749,8.240731966994943) -- (9.36,9.46);
\draw [->] (10.360891668884749,8.240731966994943) -- (11.56,6.78);
\draw (14.1,10.17) node[anchor=north west] {$x$};
\draw (7.4,5.88) node[anchor=north west] {$a$};
\draw (8.6,9.75) node[anchor=north west] {$y$};
\draw (10.8,6.9) node[anchor=north west] {$y'$};
\begin{scriptsize}
\draw [fill=qqqqff] (7.57,5.85) circle (4pt);
\draw [fill=qqqqff] (14.14,10.02) circle (4pt);
\draw [fill=xdxdff] (9.2894724653713,9.761714657459358) circle (4pt);
\draw [fill=xdxdff] (11.801386672598293,6.587506890495156) circle (4pt);
\end{scriptsize}
\end{tikzpicture}
\end{center}

\begin{proof}
C'est une conséquence immédiate des définitions :
$$d(a,x) + d(y,y') \leq \max \big( \underbrace{\underbrace{d(a,y)}_{=d(a,y')}+d(y',x)}_{\leq d(a,x)+\alpha},\underbrace{\underbrace{d(a,y')}_{=d(a,y)}+d(y,x)}_{\leq d(a,x)+\alpha} \big) + \delta.$$
\end{proof}

\noindent
Remarque : le lemme~\ref{sphere-alpha-geodesique} justifie d'une certaine manière de dessiner l'$\epsilon$-géodésique entre deux points par un ensemble \og patatoïdal \fg\ de largeur $\epsilon+\delta$ étendu entre les deux points.

\begin{lem}\label{chemin-alpha-geodesique}
Soit $\alpha,\beta$ deux réels positifs et $a,x,y,z$ quatre points de $X$. Si $y$ appartient à $\alpha\text{-}\geod(a,x)$, $z$ à $\beta\text{-}\geod(a,y)$ et si $d(y,z)\geq \frac{\alpha+\beta}{2}$, alors $z$ appartient à $(\beta+\delta)\text{-}\geod(a,x)$. 
\end{lem}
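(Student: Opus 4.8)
The statement chains two $\alpha$- and $\beta$-geodesic conditions together, so the natural strategy is to write down the hyperbolicity inequality for a carefully chosen quadruple and then feed in the two hypotheses plus the lower bound $d(y,z) \geq \frac{\alpha+\beta}{2}$ to kill the unwanted branch of the maximum. The conclusion we want is $d(a,z) + d(z,x) \leq d(a,x) + \beta + \delta$, so the quadruple to use is $(a, z, z, x)$ in some order — more precisely, I would apply $\delta$-hyperbolicity to the four points $a, x, y, z$ arranged so that the left-hand side reads $d(a,z) + d(z,x)$ (taking $x_1 = a$, $x_4 = z$, $x_2 = x_3 = z$ does not quite work since the definition wants four points; instead use $x_1=a$, $x_2=z$, $x_3=z$... ). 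Let me instead use the genuine four points: apply the definition to $(a, y, z, x)$, which gives
\begin{equation*}
d(a,x) + d(y,z) \leq \max\bigl( d(a,y) + d(z,x),\ d(a,z) + d(y,x) \bigr) + \delta.
\end{equation*}

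The point is now a case analysis on which term achieves the maximum. In the first case, $d(a,x) + d(y,z) \leq d(a,y) + d(z,x) + \delta$; since $y \in \alpha\text{-}\geod(a,x)$ we have $d(a,y) \leq d(a,x) - d(y,x) + \alpha$, but more usefully I want to isolate $d(z,x)$: rearranging gives $d(z,x) \geq d(a,x) - d(a,y) + d(y,z) - \delta$. That is a lower bound, not the upper bound we seek, so the first branch must be the one we rule out using the size hypothesis on $d(y,z)$. Indeed, combining the first-branch inequality with $d(a,y) + d(a,z) \geq$ (nothing direct)... the cleaner route: from $z \in \beta\text{-}\geod(a,y)$ we get $d(a,z) \geq d(a,y) - d(y,z) + 0$ is false; rather $d(a,z) + d(z,y) \leq d(a,y) + \beta$, hence $d(a,y) \geq d(a,z) + d(y,z) - \beta$. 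Plug this into the first branch: $d(a,x) + d(y,z) \leq d(a,y) + d(z,x) + \delta$ would then need checking against the other branch. I expect the honest computation to show that the first branch, together with $d(y,z) \geq \frac{\alpha+\beta}{2}$, forces a contradiction or collapses to the second branch, so that effectively $d(a,x) + d(y,z) \leq d(a,z) + d(y,x) + \delta$ always holds.

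Granting that, the finish is routine: from $d(a,x) + d(y,z) \leq d(a,z) + d(y,x) + \delta$ and the triangle-type bound $d(y,x) \leq$ ... no — instead use $y \in \alpha\text{-}\geod(a,x)$, i.e. $d(a,y) + d(y,x) \leq d(a,x) + \alpha$, so $d(y,x) \leq d(a,x) - d(a,y) + \alpha$. Substituting, $d(a,z) \geq d(a,x) + d(y,z) - d(y,x) - \delta \geq d(a,y) + d(y,z) - \alpha - \delta$; and from $z \in \beta\text{-}\geod(a,y)$, $d(z,x) \leq d(a,x) - d(a,z) + (\text{error})$ — here I combine $d(a,z) + d(z,x) \leq d(a,x) + (\beta+\delta)$ directly. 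Working it through, the two inequalities $d(a,x)+d(y,z) \le d(a,z)+d(y,x)+\delta$ and $d(a,y)+d(y,x)\le d(a,x)+\alpha$ and $d(a,z)+d(z,y)\le d(a,y)+\beta$ should add up, after cancelling $d(a,y)$, $d(y,x)$, $d(y,z)$ (using $d(y,z)=d(z,y)$), to exactly $d(a,z)+d(z,x) \le d(a,x)+\beta+\delta$, which is the assertion $z \in (\beta+\delta)\text{-}\geod(a,x)$.

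The main obstacle is the case distinction: I need to confirm that the branch $\max = d(a,y) + d(z,x)$ genuinely cannot occur (or yields a weaker bound that is absorbed) precisely because $d(y,z) \geq \frac{\alpha+\beta}{2}$ — this is where the somewhat mysterious hypothesis on $d(y,z)$ earns its keep, presumably by contradicting the combination of $d(a,y) \leq d(a,x) - d(y,x) + \alpha$ and $d(a,z) \geq d(a,y) + d(y,z) - \beta$ unless $d(y,z)$ is not too large, forcing consistency only via the other branch. Once the correct branch is secured, the rest is three linear inequalities added together, and no further hyperbolicity is needed. I would present it as: (1) write the four-point inequality for $a,y,z,x$; (2) observe that the first branch combined with the $\beta$-geodesic hypothesis and $d(y,z)\ge\frac{\alpha+\beta}{2}$ implies the second branch also dominates, so WLOG the max is $d(a,z)+d(y,x)$; (3) add the $\alpha$- and $\beta$-geodesic inequalities and simplify.
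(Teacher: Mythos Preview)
Your argument has a genuine gap at the very first move: you apply the four-point inequality with the wrong pair-sum on the left. You write
\[
d(a,x)+d(y,z)\;\le\;\max\bigl(d(a,y)+d(z,x),\ d(a,z)+d(y,x)\bigr)+\delta,
\]
but the quantity you must bound from above is $d(a,z)+d(z,x)$, and $d(z,x)$ appears only on the \emph{right} of your inequality, where it receives a lower bound, not an upper one. Your step~(3) then collapses: none of the three inequalities you propose to add --- the ``second branch'' $d(a,x)+d(y,z)\le d(a,z)+d(y,x)+\delta$, the $\alpha$-condition $d(a,y)+d(y,x)\le d(a,x)+\alpha$, and the $\beta$-condition $d(a,z)+d(z,y)\le d(a,y)+\beta$ --- contains the symbol $d(z,x)$ at all, so their sum cannot possibly produce $d(a,z)+d(z,x)\le d(a,x)+\beta+\delta$. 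If you carry out the addition you will find that everything cancels except $2\,d(y,z)\le \alpha+\beta+\delta$, which is neither the hypothesis nor the conclusion. Step~(2) is equally unsupported: there is no mechanism, from the given data, that forces the first branch to imply the second.

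The remedy --- and this is exactly what the paper does --- is to put $d(x,z)+d(a,y)$ on the left:
\[
d(x,z)+d(a,y)\;\le\;\max\bigl(d(x,y)+d(a,z),\ d(x,a)+d(y,z)\bigr)+\delta.
\]
Now $d(x,z)$ is honestly bounded from above. Add $d(a,z)$ to both sides; in the second branch use $d(y,z)+d(a,z)\le d(a,y)+\beta$, then subtract $d(a,y)$; in the first branch use $d(x,y)+d(a,y)\le d(a,x)+\alpha$ together with $d(a,z)-d(a,y)\le \beta-d(y,z)$. One obtains
\[
d(x,z)+d(a,z)\;\le\; d(a,x)+\max\bigl(\alpha+2\beta-2\,d(y,z),\ \beta\bigr)+\delta,
\]
and the hypothesis $d(y,z)\ge\tfrac{\alpha+\beta}{2}$ is precisely what collapses the maximum to $\beta$.
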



\begin{center}
\definecolor{xdxdff}{rgb}{0.49019607843137253,0.49019607843137253,1.}
\definecolor{qqqqff}{rgb}{0.,0.,1.}
\begin{tikzpicture}[line cap=round,line join=round,>=triangle 45,x=1.0cm,y=1.0cm,scale=0.45]
\clip(2.5,2.5) rectangle (15.5,12.);
\draw [rotate around={30.67291750972297:(9.2,7.09)},dash pattern=on 3pt off 3pt] (9.2,7.09) ellipse (6.506179515467078cm and 3.056447592791918cm);
\draw [rotate around={43.60272836202584:(7.268146282572566,7.024892149301136)},dash pattern=on 3pt off 3pt] (7.268146282572566,7.024892149301136) ellipse (4.4830914477168395cm and 1.6856920964723259cm);
\draw [rotate around={30.67291750972297:(9.2,7.09)},dash pattern=on 3pt off 3pt] (9.2,7.09) ellipse (6.316205776567683cm and 2.627918456106874cm);
\draw [->] (8.74160219742885,9.084924461311076) -- (10.086318194346292,9.790153295249999);
\draw [->] (8.74160219742885,9.084924461311076) -- (7.470991982183559,8.418559992871323);
\draw (8.2,9.430375966538508) node[anchor=north west] {$\geq \frac{\alpha+\beta}{2}$};
\draw (4.35,4.386100233231534) node[anchor=north west] {$a$};
\draw (14.078441884389509,10.051499471012873) node[anchor=north west] {$x$};
\draw (7.302549108305516,8.395170125747898) node[anchor=north west] {$z$};
\draw (10.219947386897234,9.938567924744806) node[anchor=north west] {$y$};
\begin{scriptsize}
\draw [fill=qqqqff] (4.26,4.16) circle (5pt);
\draw [fill=qqqqff] (14.14,10.02) circle (5pt);
\draw [fill=xdxdff] (10.276292565145148,9.889784298602288) circle (5pt);
\draw [fill=xdxdff] (7.281017611384707,8.318928989519035) circle (5pt);
\end{scriptsize}
\end{tikzpicture}
\end{center}

\begin{proof}
La propriété d'hyperbolicité pour $a,x,y,z$ donne
$$d(x,z)+d(y,a) \leq \max(d(x,y)+d(z,a), d(x,a)+d(y,z))+\delta,$$
\noindent
d'où, en ajoutant $d(z,a)$ de part et d'autre de l'inégalité et en utilisant le fait que $z \in \beta\text{-}\geod(y,a)$, on obtient
$$d(x,z)+d(y,a)+d(z,a) \leq \max(d(x,y)+ 2 \cdot d(z,a), d(x,a)+\underbrace{d(y,z)+d(z,a)}_{\leq d(a,y)+\beta})+\delta.$$
Puis, en retranchant $d(y,a)$ de part et d'autre de l'inégalité, et en utilisant le fait que $y \in \alpha\text{-}\geod(x,a)$ et $z \in \beta\text{-}\geod(y,a)$, on en déduit
$$d(x,z)+d(z,a) \leq \max(\underbrace{d(x,y)+d(y,a)}_{\leq d(a,x) + \alpha}+\underbrace{2 \cdot (d(z,a)-d(y,a))}_{\leq 2 \cdot (\beta-d(y,z))}, d(x,a) + \beta)+\delta,$$
\noindent
d'où finalement,
$$d(x,z)+d(z,a) \leq d(a,x) + \max(\alpha+2 \beta - 2 \cdot d(y,z), \beta)+\delta.$$
L'hypothèse $d(y,z)\geq \frac{\alpha+\beta}{2}$ permet de conclure.
\end{proof}

\begin{lem}\label{milieu-geodesique}
Soit $\epsilon$ un réel positif et $a,x,x'$ trois points de $X$. Si $a$ appartient à $\epsilon\text{-}\geod(x,x')$, alors
$$\epsilon\text{-}\geod(x,a)\cap\epsilon\text{-}\geod(x',a) \subset B(a,3\epsilon/2).$$
\end{lem}

\noindent
Remarquons que le lemme précédent n'utilise pas l'hyperbolicité de $X$.


\begin{center}
\definecolor{xdxdff}{rgb}{0.49019607843137253,0.49019607843137253,1.}
\definecolor{qqqqff}{rgb}{0.,0.,1.}
\begin{tikzpicture}[line cap=round,line join=round,>=triangle 45,x=1.0cm,y=1.0cm,scale=0.6]
\clip(1.5,1.5) rectangle (9.5,7.5);
\draw [color=xdxdff,fill=xdxdff,fill opacity=1.0] (4.911764705882353,4.147058823529412) circle (0.8cm);
\draw [dash pattern=on 2pt off 2pt] (3.,3.)-- (8.,6.);
\draw [rotate around={30.963756532073504:(3.9558823529411775,3.5735294117647065)},dash pattern=on 2pt off 2pt] (3.9558823529411775,3.5735294117647065) ellipse (1.8cm and 0.5781024825263787cm);
\draw [rotate around={30.963756532073543:(6.455882352941177,5.073529411764706)},dash pattern=on 2pt off 2pt] (6.455882352941177,5.073529411764706) ellipse (2.3cm and 0.8784675498067411cm);
\draw (2.95,3.18) node[anchor=north west] {$x$};
\draw (7.7,6.1) node[anchor=north west] {$x'$};
\draw (4.85,4.36) node[anchor=north west] {$a$};
\begin{scriptsize}
\draw [fill=qqqqff] (3.,3.) circle (3.5pt);
\draw [fill=qqqqff] (8.,6.) circle (3.5pt);
\draw [fill=qqqqff] (4.911764705882353,4.147058823529412) circle (3.5pt);
\end{scriptsize}
\end{tikzpicture}
\end{center}

\begin{proof}
Soit $y$ un élément de $\epsilon\text{-}\geod(x,a)\cap\epsilon\text{-}\geod(x',a)$. De l'inégalité triangulaire et de la définition de $\epsilon$-géodésique, on déduit
$$d(x,x') + 2 \cdot d(y,a) \leq \underbrace{d(x,y)+d(y,a)}_{\leq d(x,a)+\epsilon}+\underbrace{d(y,x')+d(y,a)}_{\leq d(x',a)+\epsilon},$$
\noindent
puis, en utilisant que $a \in \epsilon\text{-}\geod(x,x')$, il vient
$$d(x,x') + 2 \cdot d(y,a) \leq d(x,x') + 3 \cdot \epsilon.$$
\end{proof}

\vspace{0.2cm}

Nous introduisons à présent une classe d'espaces hyperboliques d'un type particulier (contenant les graphes de Cayley des groupes hyperboliques), ce qui nous permettra d'énoncer et de démontrer le théorème~\ref{enonce-bon} dans le paragraphe suivant ; de la proposition~\ref{bon-espace-hyp-discret}, nous déduirons finalement le théorème~\ref{enonce-ppal} du théorème~\ref{enonce-bon}.

\begin{defi}
Un bon espace hyperbolique discret est un espace hyperbolique, uniformément localement fini dont la métrique provient d'une structure de graphe.
\end{defi}

Deux remarques sur la définition précédente :

\begin{itemize}
\renewcommand{\labelitemi}{$\bullet$}
\item puisque la métrique $d : X \longrightarrow \N$ provient d'une structure de graphe, $(X,d)$ est {\it géodésique} dans le sens que, pour tous $x,y$ de $X$ et tout entier $k$ de $[\![0,\cdots,d(a,b)]\!]$, il existe un élément $z$ de $X$ tel que $d(x,z)=k$ et $d(z,y)=d(x,y)-k$ ;
\item comme $d$ est géodésique, l'uniforme locale finitude équivaut au fait que le nombre de points à distance $1$ d'un élément donné de $X$ est borné indépendamment de ce point.
\end{itemize}

\vspace{0.2cm}

\noindent
{\bf Exemple fondamental} : si $\Gamma$ est un groupe hyperbolique et $d$ la distance invariante à gauche associée à la longueur des mots pour un système fini de générateurs donné, alors $(\Gamma,d)$ est un bon espace hyperbolique discret ; en outre ce dernier est muni d'une action isométrique de $\Gamma$ par translation à gauche ($d$ provient de la structure de graphe de Cayley sur $\Gamma$ associée au système de générateurs donné).
On note enfin que cette action est propre, c'est-à-dire que la distance $d(o,g \cdot o)$ tend vers l'infini quand $g$ tend vers l'infini étant donné un sommet $o$ dans $(\Gamma,d)$.


\section{Des arbres aux bons espaces hyperboliques discrets}\label{demonstration-theoreme}

Dans ce paragraphe, nous énonçons et démontrons le théorème principal dans le cadre des bons espaces hyperboliques discrets. Les définitions concernant les espaces hyperboliques ont été rappelées au paragraphe~\ref{definitions-lemmes}.

\begin{thm}\label{enonce-bon}
Soit $(X,d)$ un bon espace hyperbolique discret muni d'une action isométrique d'un groupe $G$.
Notons $\delta$ un entier non nul tel que $(X,d)$ soit $\delta$-hyperbolique.
Alors il existe une application $G$-équivariante $(x,a)\mapsto \mu_x(a)$ de $X\times X$ dans les mesures de probabilité sur $X$ telle que
\begin{enumerate}
\renewcommand{\theenumi}{\roman{enumi}}
\item le support de $\mu_x(a)$ est inclus dans $B(a,4\delta)\cap2\delta\text{-}\geod(x,a)$ et même dans $S(a,4\delta)\cap2\delta\text{-}\geod(x,a)$ si $d(x,a)\geq 4\delta$ ;
\item il existe un réel $\epsilon$ strictement positif et une constante $C$ tels que pour tous $x,x',a$ de~$X$, si $d(x,x')=1$ alors $\|\mu_x(a)-\mu_{x'}(a)\|_{1}\leq C \text{e}^{-\epsilon d(x,a)}$ ;
\item il existe un réel $\eta$ strictement positif tel que, si la distance $d(x,x')$ est suffisamment grande, alors le nombre de $a$ de $X$ tels que les supports de $\mu_x(a)$ et $\mu_{x'}(a)$ sont disjoints est supérieur à $\eta \cdot d(x,x')$. 
\end{enumerate}
\end{thm}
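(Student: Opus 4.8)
The idea is to mimic the tree construction. On a tree, $\mu_x(a)$ is the Dirac mass at the neighbour of $a$ pointing toward $x$; in a $\delta$-hyperbolic graph we must average over all the "reasonable candidates" for that neighbour, because the geodesic from $a$ to $x$ is only coarsely defined. So the plan is: first set $\mu_x(a) = \delta_a$ (Dirac at $a$) whenever $d(x,a) \le 4\delta$, which makes (i) trivially true in that range and makes $\mu$ $G$-equivariant there. For $d(x,a) > 4\delta$, I would define $\mu_x(a)$ to be supported on $S(a,4\delta) \cap 2\delta\text{-}\geod(x,a)$; the natural choice is a probability measure obtained by pushing forward along geodesics from $a$ toward $x$ and stopping at radius $4\delta$. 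Concretely: let $\nu_x(a)$ be the uniform (or suitably weighted, $G$-equivariantly chosen) measure on the finite set $S(a,4\delta) \cap \geod(x,a)$ — this set is nonempty since $X$ is geodesic — and argue it lies in $S(a,4\delta)\cap 2\delta\text{-}\geod(x,a)$. Equivariance comes for free because the only data used (the metric $d$, the constant $\delta$, and a choice like "uniform measure on a finite set") are $G$-invariant. Item (i) is then essentially built into the definition, using that $X$ is geodesic and uniformly locally finite so the relevant sphere-slices are nonempty and finite.

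The real content is item (ii), the exponential decay of $\|\mu_x(a) - \mu_{x'}(a)\|_1$ when $d(x,x')=1$. The point is that $\mu_x(a)$ and $\mu_{x'}(a)$ should be nearly equal when $a$ is far from $x$, since moving the far endpoint by one step barely changes "the direction toward $x$ as seen from $a$". To get \emph{exponential} rather than just small decay, the standard trick is a \emph{telescoping / iteration} argument: instead of defining $\mu_x(a)$ in one shot, define it by composing transition steps. Introduce a Markov-type kernel $p_x$ on $X$: from a point $b$ with $d(x,b) > 4\delta$, move to the uniform measure on $S(b,1)\cap\geod(x,b)$ (one step toward $x$), and absorb at $b$ once $d(x,b)\le 4\delta$; then $\mu_x(a)$ is the limiting (absorbed) distribution of this walk started at $a$. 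Now compare the walks toward $x$ and toward $x'$ started at $a$: they use \emph{the same} transition rule except for the dependence on the target, and Lemma~\ref{sphere-alpha-geodesique} (with $\alpha$ of order $\delta$) forces the one-step distributions $p_x(b,\cdot)$ and $p_{x'}(b,\cdot)$ to have overlapping supports confined to a bounded region, hence $\|p_x(b,\cdot)-p_{x'}(b,\cdot)\|_1 \le 2 - \kappa$ for some fixed $\kappa>0$ depending only on $\delta$ and the local finiteness bound $K$, as long as $b$ is not yet absorbed. Each step toward $x$ decreases $d(x,\cdot)$ by $1$, so along a path of length $\sim d(x,a)$ one picks up a contraction factor $(1-\kappa/2)$ roughly $d(x,a)$ times before absorption; this yields $\|\mu_x(a)-\mu_{x'}(a)\|_1 \le C e^{-\epsilon d(x,a)}$ with $\epsilon = -\log(1-\kappa/2)$ and $C$ absolute. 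Making this coupling/contraction estimate precise — in particular checking that the supports genuinely overlap in a fixed-size set at every unabsorbed step, uniformly in $b$ — is the main obstacle, and it is exactly where $\delta$-hyperbolicity and uniform local finiteness are used in an essential way (via Lemmas~\ref{sphere-alpha-geodesique} and \ref{chemin-alpha-geodesique}, the latter guaranteeing that a step toward $x$ from a point on $\geod(x,b)$ with $b$ on $\geod(x,a)$ keeps us inside a bounded-constant geodesic corridor from $a$ to $x$).

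For item (iii) I would use Lemma~\ref{milieu-geodesique}. Pick $a$ on a geodesic from $x$ to $x'$ with $d(x,a)$ and $d(x',a)$ both at least, say, $10\delta$ and also at least $4\delta$ away from the absorbing zones; there are at least $d(x,x') - O(\delta)$ such points, i.e.\ a proportion $\eta\cdot d(x,x')$ of the geodesic for a fixed $\eta>0$ once $d(x,x')$ is large. For such an $a$, item (i) tells us $\operatorname{supp}\mu_x(a) \subset 2\delta\text{-}\geod(x,a)$ and $\operatorname{supp}\mu_{x'}(a) \subset 2\delta\text{-}\geod(x',a)$, while both supports sit on the sphere $S(a,4\delta)$. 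If a point $z$ lay in both supports, then $z \in 2\delta\text{-}\geod(x,a)\cap 2\delta\text{-}\geod(x',a)$, so Lemma~\ref{milieu-geodesique} (with $\epsilon = 2\delta$, applicable since $a \in \geod(x,x') \subset 2\delta\text{-}\geod(x,x')$) gives $d(z,a) \le 3\delta$, contradicting $d(z,a) = 4\delta$. Hence the supports are disjoint for all these $\Omega(d(x,x'))$ choices of $a$, which is exactly (iii). The only care needed here is in the bookkeeping of which $a$'s on the geodesic are "deep enough", but that loses only an additive $O(\delta)$, harmless for large $d(x,x')$.
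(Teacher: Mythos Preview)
Your argument for (iii) via Lemma~\ref{milieu-geodesique} is correct and matches the paper. The gap is in the iterated construction and in (ii): your kernel $p_x$ depends on $x$, the walk starts at $a$ and absorbs when $d(x,\cdot)\le 4\delta$, so the limiting measure is supported near $x$, not in $B(a,4\delta)$, already contradicting (i). More importantly, the contraction claim for (ii) is wrong. You are applying two \emph{different} operators $P_x,P_{x'}$ to the \emph{same} initial point $\delta_a$, and the per-step overlap bound $\|p_x(b,\cdot)-p_{x'}(b,\cdot)\|_1\le 2-\kappa$ does \emph{not} contract $\|P_x^n\delta_a-P_{x'}^n\delta_a\|_1$; it only bounds the discrepancy \emph{introduced} at each of the $\sim d(x,a)$ steps, and these accumulate rather than decay. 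On a tree (take $\delta=1$) with $d(x,x')=1$, your walk is deterministic and absorbs at the unique point of $[a,x]$ at distance $4$ from $x$, respectively at the point of $[a,x']$ at distance $4$ from $x'$; these are distinct, so $\|\mu_x(a)-\mu_{x'}(a)\|_1=2$ for every far $a$ --- no decay at all.

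The fix, and the paper's approach, is to reverse the roles: iterate a single operator $T_a$, depending only on $a$, that sends $\delta_z$ (for $d(z,a)>4\delta$) to the uniform measure on $\delta\text{-}\geod(z,a)\cap S(a,r)$ for the largest $r\in\{4\delta,9\delta,14\delta,\dots\}$ below $d(z,a)$, and set $\mu_x(a)=\lim_l T_a^l(\delta_x)$. Now $\mu_x(a)$ and $\mu_{x'}(a)$ arise from the \emph{same} operator applied to nearby initial data $\delta_x,\delta_{x'}$; after one or two steps both supports land on a common sphere $S(a,r)$ within $4\delta$ of each other, and each further application of $T_a$ genuinely contracts the $\ell^1$-distance by a fixed factor $1-1/C$ (proved via Lemmas~\ref{sphere-alpha-geodesique} and~\ref{chemin-alpha-geodesique}). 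Note also that the step size must be of order $5\delta$, not $1$: with unit steps the one-step images from two nearby points on the same sphere need not meet (the paper's $\Z\times\Z/2\Z$ remark is precisely about this), whereas a $5\delta$-jump makes the hypothesis $d(y,z)\ge(\alpha+\beta)/2$ of Lemma~\ref{chemin-alpha-geodesique} with $\alpha=8\delta$, $\beta=0$ available and forces a common point in the two supports.
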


\begin{cor}\label{cor-enonce-bon}
Soit $(X,d)$ un bon espace hyperbolique discret muni d'une action isométrique propre d'un groupe $G$ et $\delta$ un entier non nul tel que $(X,d)$ soit $\delta$-hyperbolique.
Alors $G$ admet une action affine propre sur un espace de Banach affine sur $\ell^p(X^{\leq 4\delta})$ pour tout~$p$ suffisamment grand.
\end{cor}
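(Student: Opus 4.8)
The plan is to feed the map $\mu$ furnished by Theorem~\ref{enonce-bon} into the general recipe for building affine actions described at the end of Section~1, exactly as the tree case in Section~2 was obtained there from the three properties of $\mu$. Fix a base point $o\in X$. Let $V$ be the vector space of all real functions on $X^{\leq 4\delta}$, with the $G$-action $(g\cdot f)(a,y)=f(g^{-1}a,g^{-1}y)$; it preserves the Banach subspace $\mathcal{E}^{\circ}=\ell^{p}(X^{\leq 4\delta})$ and restricts there to an isometric representation $\pi$. By item~(i) of Theorem~\ref{enonce-bon} the function $\xi_{\circ}:(a,y)\mapsto\mu_{o}(a)(\{y\})$ is supported on $X^{\leq 4\delta}$, hence defines an element of $V$, and $G$-equivariance of $\mu$ gives, for every $g\in G$,
\[
c(g):=\xi_{\circ}-g\cdot\xi_{\circ}=\mu_{o}-\mu_{g\cdot o}.
\]
Since $X$ is discrete the orbit map $g\mapsto g\cdot o$, and hence $g\mapsto c(g)$, is locally constant, in particular continuous; so as soon as one checks that $c(g)\in\mathcal{E}^{\circ}$ for every $g$, the construction of Section~1 yields an affine isometric action of $G$ on $\mathcal{E}=\xi_{\circ}+\ell^{p}(X^{\leq 4\delta})$ with cocycle $c$, and only properness remains to be seen.

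Two elementary facts are used throughout. Since $X$ is uniformly locally finite there is an integer $K$, depending only on $\delta$, with $|B(a,4\delta)|\leq K$ for all $a$; by item~(i) each $\mu_{x}(a)$ is then a probability measure carried by at most $K$ points, so $v:=\mu_{o}(a)-\mu_{g\cdot o}(a)$ has at most $2K$ nonzero coordinates, each of absolute value $\leq 1$. Hence $\|v\|_{p}^{p}\leq\|v\|_{1}$, and more precisely, as $|v_{i}|\leq\min(1,\|v\|_{1})$ for each coordinate, $\|v\|_{p}^{p}\leq\min(1,\|v\|_{1})^{p-1}\|v\|_{1}$. On the other hand, by convexity of $t\mapsto t^{p}$, a probability measure supported on at most $K$ points has $\ell^{p}$-norm at least $K^{(1-p)/p}>0$.

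The core step is the finiteness (in fact polynomial control in $n:=d(o,g\cdot o)$) of $\|c(g)\|_{p}$. Choose a graph geodesic $o=x_{0},x_{1},\dots,x_{n}=g\cdot o$ with image $\gamma$; telescoping and using item~(ii),
\[
\|\mu_{o}(a)-\mu_{g\cdot o}(a)\|_{1}\leq\sum_{i=0}^{n-1}\|\mu_{x_{i}}(a)-\mu_{x_{i+1}}(a)\|_{1}\leq C\sum_{i=0}^{n-1}\text{e}^{-\epsilon d(x_{i},a)}\leq Cn\,\text{e}^{-\epsilon\,d(a,\gamma)}.
\]
With the bounds of the previous paragraph, $\|\mu_{o}(a)-\mu_{g\cdot o}(a)\|_{p}^{p}\leq 2$ always, while $\|\mu_{o}(a)-\mu_{g\cdot o}(a)\|_{p}^{p}\leq(Cn)^{p}\text{e}^{-\epsilon p\,d(a,\gamma)}$ once $d(a,\gamma)\geq D_{0}:=\frac{1}{\epsilon}\ln(Cn)$. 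Being a uniformly locally finite graph, $X$ has balls growing at most geometrically, say $|B(a,r)|\leq\kappa^{r}$ for some constant $\kappa$, so at most $(n+1)\kappa^{D}$ points $a$ satisfy $d(a,\gamma)=D$; splitting the sum over $X$ at $D_{0}$ gives
\[
\|c(g)\|_{p}^{p}\leq 2(n+1)\kappa^{D_{0}}+(n+1)(Cn)^{p}\sum_{D>D_{0}}\kappa^{D}\text{e}^{-\epsilon pD}.
\]
The geometric series converges precisely when $\kappa\,\text{e}^{-\epsilon p}<1$, i.e. $p>\frac{\ln\kappa}{\epsilon}$, and then, since $\kappa^{D_{0}}=(Cn)^{(\ln\kappa)/\epsilon}$ and $\text{e}^{-\epsilon pD_{0}}=(Cn)^{-p}$, the whole right-hand side is bounded by a polynomial in $n$. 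Thus for every $p>\frac{\ln\kappa}{\epsilon}$ one has $c(g)\in\ell^{p}(X^{\leq 4\delta})$ for all $g$, and the affine action is well defined. I expect this to be the main obstacle: a priori $\mu_{o}-\mu_{g\cdot o}$ has full support in the first variable, so the exponential decay of item~(ii) must genuinely be weighed against the volume growth of $X$, and it is this balance that dictates how large $p$ must be.

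For properness, the $G$-action on $X$ being proper forces $n=d(o,g\cdot o)\to\infty$ as $g\to\infty$. By item~(iii) there is $\eta>0$ such that for $n$ large at least $\eta n$ points $a$ have $\mu_{o}(a)$ and $\mu_{g\cdot o}(a)$ with disjoint supports; for such an $a$ the two measures sit on disjoint finite sets, so $\|\mu_{o}(a)-\mu_{g\cdot o}(a)\|_{p}^{p}=\|\mu_{o}(a)\|_{p}^{p}+\|\mu_{g\cdot o}(a)\|_{p}^{p}\geq K^{1-p}$ by the lower bound above. Keeping only these terms,
\[
\|c(g)\|_{p}^{p}\geq\eta\,K^{1-p}\,d(o,g\cdot o),
\]
which tends to $+\infty$ as $g\to\infty$. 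Hence $\lim_{g\to\infty}\|c(g)\|_{p}=\infty$, so the affine action of $G$ on $\xi_{\circ}+\ell^{p}(X^{\leq 4\delta})$ is proper for every $p>\frac{\ln\kappa}{\epsilon}$, which proves the corollary.
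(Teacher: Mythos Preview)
Your proof is correct and follows essentially the same approach as the paper: apply the construction of Section~1 with $\xi_\circ=\mu_o$, using items~(i), (ii), (iii) of Theorem~\ref{enonce-bon} respectively for the support condition, the $\ell^p$-summability of $c(g)$, and properness. Your integrability estimate via $d(a,\gamma)$ and the split at $D_0$ is more elaborate than the paper's (which simply bounds $\|\mu_o(a)-\mu_{g\cdot o}(a)\|_1\leq C_g\,\text{e}^{-\epsilon d(o,a)}$ after telescoping and then notes that $\sum_{a\in X}\text{e}^{-p\epsilon d(o,a)}<\infty$ for $p$ large by the at-most-exponential growth of spheres), and yields as a bonus that $\|c(g)\|_p^p$ grows only polynomially in $d(o,g\cdot o)$---but only finiteness is needed here.
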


\begin{proof}
Notons $\mathcal{E}^{\circ}$ l'espace de Banach $\ell^p(X^{\leq 4\delta})$ ($p \in [1,\infty[$), $V$ l'espace vectoriel des fonctions sur $X^{\leq 4\delta}$ et $\xi_{\circ} : (x,y) \longmapsto \mu_o(x)(\{y\})$, où $o$ désigne un point de $X$ avec $\mu$ donnée par le théorème~\ref{enonce-ppal}.
L'item (i) du théorème~\ref{enonce-ppal} permet de voir $\xi_{\circ}$ comme un élément de~$V$ ; par ailleurs, pour tout $g$ de $G$, en utilisant la $G$-équivariance de $\mu$, la fonction $c(g) = \xi_{\circ} - g \cdot \xi_{\circ}$ se réécrit $c(g) = \mu_o - \mu_{g \cdot o}$.
L'item (ii) du théorème~\ref{enonce-ppal} assure que $c(g)$ est $\ell^p$-intégrable dès que $\sum_{a \in X}\text{e}^{-p \epsilon d(o,a)}$ est finie, ce qui sera toujours le cas pour $p$ suffisamment grand compte-tenu de la croissance au plus exponentielle des sphères dans $X$.
De plus, le cocycle $c : G \longrightarrow \mathcal{E}^{\circ}$ est localement constant et {\it a fortiori} continu, comme conséquence de la continuité de $g \mapsto g \cdot o$ de $G$ dans $X$ discret.
Enfin, pour tous $g$ de $G$ et $a$ de $X$, si les supports de $\mu_o(a)$ et $\mu_{g \cdot o}(a)$ sont disjoints, alors $\|\mu_o(a)-\mu_{g \cdot o}(a)\|_{1} = 2$ ; de l'item (iii) du théorème~\ref{enonce-ppal}, on déduit facilement que $\| c(g) \|_p$ est, à une constante multiplicative près, minorée par $d(o,g \cdot o)^{1/p}$, assurant ainsi que le 1-cocycle $c : G \longrightarrow \ell^p(X^{\leq 4\delta})$ est propre puisque l'action de $G$ sur $X$ est propre.
\end{proof}

\begin{cor}\cite{yu}
Tout groupe hyperbolique $\Gamma$ admet une action affine propre sur un espace $\ell^{p}$ pour $p$ suffisamment grand (et même plus précisément sur $\ell^{p}$ d'une réunion finie de copies de $\Gamma$).
\end{cor}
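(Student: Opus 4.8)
Le plan est de spécialiser le corollaire~\ref{cor-enonce-bon} à l'exemple fondamental du paragraphe précédent, puis d'identifier l'espace de Banach obtenu. D'abord on fixe un système fini de générateurs de $\Gamma$ et on note $d$ la distance des mots correspondante : comme rappelé dans l'exemple fondamental, $(\Gamma,d)$ est un bon espace hyperbolique discret, muni de l'action isométrique \emph{propre} de $\Gamma$ par translation à gauche. On choisit un entier non nul $\delta$ tel que $(\Gamma,d)$ soit $\delta$-hyperbolique. Le corollaire~\ref{cor-enonce-bon}, appliqué avec $G=\Gamma$, fournit alors pour tout $p$ suffisamment grand une action affine isométrique propre de $\Gamma$ sur un espace de Banach affine sur $\ell^p(\Gamma^{\leq 4\delta})$.

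Il reste ensuite à reconnaître $\ell^p(\Gamma^{\leq 4\delta})$ comme $\ell^p$ d'une réunion finie de copies de $\Gamma$. La partie linéaire de l'action précédente est la représentation de permutation de $\Gamma$ sur $\ell^p(\Gamma^{\leq 4\delta})$ induite par l'action diagonale de $\Gamma$ sur $\Gamma^{\leq 4\delta}=\{(x,y)\in\Gamma\times\Gamma ; d(x,y)\leq 4\delta\}$. Or l'application $(x,y)\longmapsto (x,x^{-1}y)$ est une bijection de $\Gamma^{\leq 4\delta}$ sur $\Gamma\times F$, où $F=B(1,4\delta)$ désigne la boule fermée de rayon $4\delta$ autour de l'élément neutre ; cette bijection entrelace l'action diagonale au but avec, à la source, l'action de $\Gamma$ par translation à gauche sur le facteur $\Gamma$ (et triviale sur $F$). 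L'uniforme locale finitude de $(\Gamma,d)$ assure que $F$ est fini. Par conséquent $\ell^p(\Gamma^{\leq 4\delta})$ est isométriquement $\Gamma$-isomorphe à $\ell^p(\Gamma\times F)=\ell^p\big(\bigsqcup_{f\in F}\Gamma\big)$, la représentation de $\Gamma$ étant une somme finie de copies de la représentation régulière sur $\ell^p(\Gamma)$.

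Enfin, comme on l'a observé au paragraphe~1, le choix d'une origine identifie un espace de Banach affine sur un espace de Banach $E$ à $E$ lui-même, l'action affine se transportant en une action affine de même partie linéaire, obtenue en ajoutant au cocycle un cobord, lequel est borné et ne modifie donc pas le caractère propre. On obtient ainsi une action affine isométrique propre de $\Gamma$ sur $\ell^p\big(\bigsqcup_{f\in F}\Gamma\big)$, qui est en particulier un espace $\ell^p$ (séparable, $\Gamma$ étant dénombrable). Il n'y a pas ici d'obstacle sérieux : le seul point à surveiller est la $\Gamma$-équivariance de la bijection $\Gamma^{\leq 4\delta}\cong\Gamma\times F$, qui est précisément ce qui garantit que l'action finale est affine isométrique, de partie linéaire une somme de représentations régulières, et que la propreté du cocycle est conservée.
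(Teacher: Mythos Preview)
Your proof is correct and follows exactly the same route as the paper: apply Corollary~\ref{cor-enonce-bon} to the fundamental example $(\Gamma,d)$, then use the $\Gamma$-equivariant bijection $(x,y)\mapsto(x,x^{-1}y)$ from $\Gamma^{\leq 4\delta}$ onto $\Gamma\times B(1,4\delta)$ to recognise the space as a finite union of copies of $\Gamma$. One small slip of wording: you have interchanged \emph{source} and \emph{but} --- the diagonal action is on the source $\Gamma^{\leq 4\delta}$ and the translation-on-the-first-factor action is on the target $\Gamma\times F$ --- but the intended equivariance is clear and the argument is unaffected.
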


\begin{proof}
C'est une conséquence immédiate du corollaire précédent \ref{cor-enonce-bon} et de l'exemple fondamental de bon espace hyperbolique discret.
La preuve est complète après avoir noté que, pour tout entier naturel $R$, le $\Gamma$-espace $X^{\leq R}$ s'identifie à une réunion finie de copies de $\Gamma$.
En effet, l'application $(x,y) \longmapsto (x,x^{-1}y)$ de $X^{\leq R}  = \{(x,y)\in \Gamma \times \Gamma ; d(x,y)\leq R\}$ dans $\Gamma \times B(1,R)$ est une bijection $\Gamma$-équivariante, où l'action de $\Gamma$ sur $\Gamma \times B(1,R)$ est par translation à gauche sur le premier facteur.
\end{proof}

La suite de ce paragraphe est consacrée à la démonstration du théorème~\ref{enonce-bon}.

\begin{proof}
Nous introduisons quelques notations qui nous seront utiles par la suite.
Tout d'abord la fonction $\underline{\delta}$ définie sur les entiers naturels par $\underline{\delta}(n) = (4+5n)\delta$.
Si $t$ est un nombre réel positif, on note $n_{<t}$ le plus grand entier naturel $n$ tel que $\underline{\delta}(n)< t$.
Enfin si $A$ est une partie non vide de $X$ on note $\nu_A$ la mesure de probabilité uniforme sur $A$, égale au produit par $(\sharp A)^{-1}$ de la fonction caractéristique de $A$.

Étant donné un point $a$ de $X$, on définit une application $T_{a}$ de l'ensemble des mesures de probabilité sur $X$ dans lui-même en posant, pour tout point~$x$ de $X$,
\begin{itemize}
\renewcommand{\labelitemi}{$\bullet$}
\item $T_{a}(\delta_{x})=\delta_{x}$ si $d(x,a)\leq 4\delta$ ;
\item $T_{a}(\delta_{x})=\nu_{A}$, où $A=\delta\text{-}\geod(x,a) \cap S(a,\underline{\delta}(n_{<d(x,a)}))$ sinon,
\end{itemize}

\noindent
et en prolongeant $T_a$ par \og linéarité \fg\ à l'ensemble des mesures de probabilité.
Cette construction est par définition $G$-équivariante.


\begin{center}
\definecolor{xdxdff}{rgb}{0.49019607843137253,0.49019607843137253,1.}
\definecolor{qqqqff}{rgb}{0.,0.,1.}
\begin{tikzpicture}[line cap=round,line join=round,>=triangle 45,x=1.0cm,y=1.0cm,scale=0.6]
\clip(1.5,1.5) rectangle (11.,7.);
\draw [color=xdxdff,fill=xdxdff,fill opacity=1.0] (3.,3.) circle (0.5283937925449161cm);
\draw [rotate around={18.434948822922017:(6.,4.)},dash pattern=on 2pt off 2pt] (6.,4.) ellipse (3.236925187912023cm and 0.69114735920737cm);
\draw [shift={(3.,3.)}] plot[domain=-0.5016040541891202:1.4440287916935348,variable=\t]({1.*2.121320343559643*cos(\t r)+0.*2.121320343559643*sin(\t r)},{0.*2.121320343559643*cos(\t r)+1.*2.121320343559643*sin(\t r)});
\draw [shift={(3.,3.)}] plot[domain=-0.693986961537167:1.6710876442118139,variable=\t]({1.*3.4083427057735847*cos(\t r)+0.*3.4083427057735847*sin(\t r)},{0.*3.4083427057735847*cos(\t r)+1.*3.4083427057735847*sin(\t r)});
\draw [shift={(3.,3.)}] plot[domain=-0.6366577596779681:1.7514533619708839,variable=\t]({1.*4.676109494013158*cos(\t r)+0.*4.676109494013158*sin(\t r)},{0.*4.676109494013158*cos(\t r)+1.*4.676109494013158*sin(\t r)});
\draw [shift={(3.,3.)}] plot[domain=-0.7201157591188156:1.809776775654587,variable=\t]({1.*5.853170081246572*cos(\t r)+0.*5.853170081246572*sin(\t r)},{0.*5.853170081246572*cos(\t r)+1.*5.853170081246572*sin(\t r)});
\draw [shift={(3.,3.)},line width=3.6pt]  plot[domain=0.25537376795162037:0.3881273408416643,variable=\t]({1.*5.853170081246571*cos(\t r)+0.*5.853170081246571*sin(\t r)},{0.*5.853170081246571*cos(\t r)+1.*5.853170081246571*sin(\t r)});
\draw [shift={(3.,3.)}] plot[domain=-0.41850890555333464:1.9107266962645173,variable=\t]({1.*7.1358531375022*cos(\t r)+0.*7.1358531375022*sin(\t r)},{0.*7.1358531375022*cos(\t r)+1.*7.1358531375022*sin(\t r)});
\draw (9.06,5.18) node[anchor=north west] {$x$};
\draw (8.6,4.6) node[anchor=north west] {$T_a(\delta_x)$};
\draw [->] (5.85,5.85) -- (6.306508732787499,6.306508732787499);
\draw [->] (5.85,5.85) -- (5.410062239860208,5.410062239860208);
\draw (5.1,6.6) node[anchor=north west] {$5\delta$};
\draw (1.3,4.2) node[anchor=north west] {$B(a,4\delta)$};
\draw (2.62,3) node[anchor=north west] {$a$};
\begin{scriptsize}
\draw [fill=qqqqff] (3.,3.) circle (4pt);
\draw [fill=qqqqff] (9.,5.) circle (4pt);
\end{scriptsize}
\end{tikzpicture}
\end{center}

On définit enfin
$$\mu_x(a) = \lim_{l \to \infty} T_{a}^l (\delta_{x}), \quad \text{où} \quad T_{a}^l = \underbrace{T_{a} \circ \cdots \circ T_{a}}_{l \, \text{fois}}.$$
L'existence de cette limite est évidente puisque par construction la suite est stationnaire pour $l > n_{<d(x,a)}$.
Si $d(x,a)\geq 4\delta$ et $1 \leq l \leq n_{<d(x,a)}+1$, on remarque que $T_{a}^{l}(\delta_{x})$ est supportée sur la sphère $S(a,\underline{\delta}(n_{<d(x,a)}+1-l))$ ; la mesure $\mu_x(a)$ est donc dans ce cas une mesure de probabilité supportée sur $S(a,\underline{\delta}(0))$.


\begin{center}
\definecolor{xdxdff}{rgb}{0.49019607843137253,0.49019607843137253,1.}
\definecolor{qqqqff}{rgb}{0.,0.,1.}
\begin{tikzpicture}[line cap=round,line join=round,>=triangle 45,x=1.0cm,y=1.0cm,scale=0.9]
\clip(2.,2.) rectangle (14.,6.);
\draw (3.,5.)-- (5.,5.);
\draw (7.,5.)-- (5.,5.);
\draw (7.,5.)-- (9.,5.);
\draw (9.,5.)-- (11.,5.);
\draw (11.,5.)-- (13.,5.);
\draw [dash pattern=on 1pt off 1pt] (3.,5.)-- (1.,5.);
\draw [dash pattern=on 1pt off 1pt] (1.,3.)-- (3.,3.);
\draw (3.,3.)-- (5.,3.);
\draw (5.,3.)-- (7.,3.);
\draw (7.,3.)-- (9.,3.);
\draw (9.,3.)-- (11.,3.);
\draw (11.,3.)-- (13.,3.);
\draw [dash pattern=on 1pt off 1pt] (13.,3.)-- (15.,3.);
\draw [dash pattern=on 1pt off 1pt] (13.,5.)-- (15.,5.);
\draw (3.,5.)-- (3.,3.);
\draw (5.,5.)-- (5.,3.);
\draw (7.,5.)-- (7.,3.);
\draw (9.,5.)-- (9.,3.);
\draw (11.,5.)-- (11.,3.);
\draw (13.,5.)-- (13.,3.);
\draw [shift={(1.42,-0.58)}] plot[domain=0.3064845648498623:1.3351363903325004,variable=\t]({1.*6.629690792186314*cos(\t r)+0.*6.629690792186314*sin(\t r)},{0.*6.629690792186314*cos(\t r)+1.*6.629690792186314*sin(\t r)});
\draw (7.25,2.823067609584557) node[anchor=north west] {$S(a,1)$};
\draw (5,3.45) node[anchor=north west] {$a$};
\draw (11,3.55) node[anchor=north west] {$x'$};
\draw (11,5.45) node[anchor=north west] {$x$};
\draw (7,3.45) node[anchor=north west] {$z$};
\draw (5,5.5) node[anchor=north west] {$y$};
\begin{scriptsize}
\draw [fill=xdxdff] (3.,5.) circle (2.5pt);
\draw [fill=xdxdff] (3.,3.) circle (2.5pt);
\draw [fill=xdxdff] (5.,5.) circle (2.5pt);
\draw [fill=xdxdff] (7.,5.) circle (2.5pt);
\draw [fill=xdxdff] (9.,5.) circle (2.5pt);
\draw [fill=qqqqff] (11.,5.) circle (2.5pt);
\draw [fill=xdxdff] (13.,5.) circle (2.5pt);
\draw [fill=qqqqff] (5.,3.) circle (2.5pt);
\draw [fill=xdxdff] (7.,3.) circle (2.5pt);
\draw [fill=xdxdff] (9.,3.) circle (2.5pt);
\draw [fill=xdxdff] (11.,3.) circle (2.5pt);
\draw [fill=xdxdff] (13.,3.) circle (2.5pt);
\end{scriptsize}
\end{tikzpicture}
\end{center}

\noindent
{\bf Remarque} : si on considère l'espace métrique $\Z \times \Z/{2\Z}$, on comprend qu'itérer une seule fois l'application $T_{a}$ sur $\delta_x$ n'est pas suffisant pour pouvoir construire une mesure satisfaisant la deuxième condition de l'énoncé puisqu'avec les notations du dessin, on a $T_{a}(\delta_x) = \frac{1}{2}(\delta_y + \delta_z)$ et $T_{a}(\delta_{x'}) = \delta_z$.
C'est tout l'intérêt de la construction de $\mu_x(a)$ ci-dessus qui, en découpant l'espace en couronnes d'épaisseur~$5\delta$, \og homogénéise \fg\ la mesure à chaque itération et permet aux mesures $\mu_x(a)$ et $\mu_{x'}(a)$ d'être exponentiellement proches au fur et à mesure que le point $a$ s'éloigne des points $x$ et $x'$ voisins l'un de l'autre.

\vspace{0.2cm}

Nous commençons par démontrer que $\mu$ satisfait la première assertion du théorème~\ref{enonce-bon} : c'est une conséquence immédiate du lemme suivant.

\begin{lem}\label{iteration1}
Pour tout entier naturel $l$, le support de $T_{a}^{l}(\delta_{x})$ est contenu dans $2\delta\text{-}\geod(x,a)$.
\end{lem}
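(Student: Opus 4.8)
On procéderait par récurrence sur $l$, le lemme~\ref{chemin-alpha-geodesique} jouant le rôle d'outil principal. Le cas $l=0$ est trivial, puisque $T_a^0(\delta_x)=\delta_x$ a pour support $\{x\}$ et que $x\in 2\delta\text{-}\geod(x,a)$. Pour le pas de récurrence, on utiliserait que $T_a$ étant prolongée par linéarité, le support de $T_a^{l+1}(\delta_x)$ est la réunion des supports des $T_a(\delta_y)$, $y$ parcourant le support de $T_a^l(\delta_x)$ ; il suffit donc de fixer un tel $y$, qui appartient à $2\delta\text{-}\geod(x,a)$ par hypothèse de récurrence, et de montrer que le support de $T_a(\delta_y)$ est inclus dans $2\delta\text{-}\geod(x,a)$.

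Trois cas se présenteraient. Si $d(y,a)\le 4\delta$, alors $T_a(\delta_y)=\delta_y$ et il n'y a rien à faire. Si $d(y,a)>4\delta$ et $l=0$ (donc $y=x$), alors le support de $T_a(\delta_x)$ est contenu dans $\delta\text{-}\geod(x,a)$, lui-même contenu dans $2\delta\text{-}\geod(x,a)$. Reste le cas $d(y,a)>4\delta$ et $l\ge 1$. On observerait alors que $y$ est sur une sphère $S(a,\underline{\delta}(k))$ avec $k\ge 1$ : en effet $y$ appartient au support d'une mesure $T_a(\delta_w)$ avec nécessairement $d(w,a)>4\delta$ (faute de quoi $T_a(\delta_w)=\delta_w$ et $y=w$ serait à distance au plus $4\delta$ de $a$), et un tel support est porté par la sphère $S(a,\underline{\delta}(n_{<d(w,a)}))$ ; comme $\underline{\delta}(k)=(4+5k)\delta>4\delta$, on a $k\ge 1$, et la définition de $n_{<d(y,a)}$ donne $n_{<d(y,a)}=k-1$. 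Tout point $z$ du support de $T_a(\delta_y)$ vérifie alors $d(z,a)=\underline{\delta}(k-1)=\underline{\delta}(k)-5\delta=d(y,a)-5\delta$, d'où $d(y,z)\ge 5\delta$ par l'inégalité triangulaire.

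Il ne resterait qu'à appliquer le lemme~\ref{chemin-alpha-geodesique} avec $\alpha=2\delta$ et $\beta=\delta$ : $y\in 2\delta\text{-}\geod(a,x)$ par hypothèse de récurrence, $z\in\delta\text{-}\geod(a,y)$ puisque le support de $T_a(\delta_y)$ est inclus dans $\delta\text{-}\geod(y,a)$, et $d(y,z)\ge 5\delta\ge\frac{2\delta+\delta}{2}$ ; le lemme donne $z\in(\delta+\delta)\text{-}\geod(a,x)=2\delta\text{-}\geod(x,a)$, ce qui achèverait la récurrence.

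Le point qui demande de l'attention est la vérification de l'hypothèse $d(y,z)\ge\frac{\alpha+\beta}{2}$ du lemme~\ref{chemin-alpha-geodesique}. Elle peut être en défaut à la toute première itération de $T_a$ (le \og saut \fg\ $d(x,y_1)$ peut n'être que de $1$), ce qui impose de traiter le cas $l=0$ séparément, mais ce cas est immédiat car $T_a$ y produit directement un ensemble $\delta$-géodésique. Aux itérations suivantes en revanche, les itérés vivent sur la famille discrète de sphères $S(a,\underline{\delta}(k))$, dont les rayons consécutifs diffèrent exactement de $5\delta$ : le saut est donc au moins $5\delta$, ce qui rend le lemme applicable. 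C'est précisément la raison d'être du découpage en \og couronnes \fg\ d'épaisseur $5\delta$ dans la définition de $T_a$.
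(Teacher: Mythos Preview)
Ta démonstration est correcte et suit exactement la même approche que celle du papier : récurrence sur $l$ en utilisant le lemme~\ref{chemin-alpha-geodesique} avec $\alpha=2\delta$ et $\beta=\delta$. Le papier traite $l=0$ et $l=1$ comme cas de base et affirme directement $d(y,z)\geq 5\delta$ dans le pas de récurrence, alors que tu ne prends que $l=0$ comme base et tu détailles soigneusement le raisonnement (notamment la justification que $y$ vit sur une sphère $S(a,\underline{\delta}(k))$ avec $k\geq 1$ dès que $l\geq 1$) ; c'est une différence d'organisation, pas de méthode.
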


C'est évident pour $l=0$ et $l=1$ puisque $\delta\text{-}\geod(x,a) \subset 2\delta\text{-}\geod(x,a)$.
Le lemme~\ref{chemin-alpha-geodesique} (avec $\alpha=2\delta$, $\beta=\delta$) permet de conclure par récurrence sur $l$.
En effet, si $z$ est dans le support de $T_{a}^{l+1}(\delta_{x})$, désignons par $y$ un élément du support de $T_{a}^{l}(\delta_{x})$ tel que $z$ soit dans le support de $T_{a}(\delta_{y})$.
Mais alors $y$ appartient à $\alpha\text{-}\geod(a,x)$, $z$ à $\beta\text{-}\geod(a,y)$ et $d(y,z)\geq 5\delta\geq 3\delta/2$.
\cqfd

\vspace{0.2cm}

Le troisième point du théorème~\ref{enonce-bon} se déduit du premier point et du lemme~\ref{milieu-geodesique} : en effet, les supports de $\mu_x(a)$ et $\mu_{x'}(a)$ sont alors disjoints pour tous les points $a$ appartenant à $2\delta\text{-}\geod(x,x')$ et qui sont à distance supérieure à $4\delta$ de $x$ et $x'$.
\vspace{0.2cm}

Il nous reste donc à voir que la deuxième assertion de l'énoncé~\ref{enonce-bon} est également satisfaite.
Pour cela, on appelle {\it distance maximale} entre deux parties finies~$A$ et~$B$ de~$X$ le nombre $\max_{a\in A,b\in B}d(a,b)$.
On commence par démontrer que pour deux points~$x$, $x'$ proches, on peut contrôler la distance maximale entre les supports de $T_{a}(\delta_{x})$ et de $T_{a}(\delta_{x'})$.
Plus précisément, on a le lemme suivant.

\begin{lem}\label{lem3}
Soit $x,x'$ deux points voisins de $X$ ($d(x,x')=1$) et $n$ un entier naturel.
\begin{enumerate}
\renewcommand{\theenumi}{\roman{enumi}}
\item Si $x$ et $x'$ appartiennent à la couronne $B(a,\underline{\delta}(n+1)) \setminus B(a,\underline{\delta}(n))$, alors les mesures $T_{a}(\delta_{x})$ et $T_{a}(\delta_{x'})$ sont supportées sur la sphère $S(a,\underline{\delta}(n))$ et la distance maximale entre les supports de ces mesures est inférieure à $4\delta$.
\item Si $x$ appartient à la sphère $S(a,\underline{\delta}(n))$ et $x'$ à la sphère $S(a,\underline{\delta}(n)+1)$, même conclusion pour les mesures $\delta_{x}$ et $T_{a}(\delta_{x'})$ qu'au point (i).
\end{enumerate}
\noindent
\end{lem}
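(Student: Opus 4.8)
The plan is to reduce both statements to Lemma~\ref{sphere-alpha-geodesique}. Throughout, recall that $\underline{\delta}(m)=(4+5m)\delta$ and that $\delta$ is a nonzero integer, hence $\delta\geq 1$.

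First I would identify, in each case, which branch of the definition of $T_a$ is used and on which sphere the resulting measure is supported. In case (i) the hypothesis $x,x'\in B(a,\underline{\delta}(n+1))\setminus B(a,\underline{\delta}(n))$ says exactly that $\underline{\delta}(n)<d(x,a)\leq\underline{\delta}(n+1)$ and likewise for $x'$; since $\underline{\delta}(n)\geq 4\delta$ this forces $d(x,a),d(x',a)>4\delta$, so $T_a$ is given by the averaging formula, and the definition of $n_{<t}$ combined with $\underline{\delta}(n)<d(x,a)\leq\underline{\delta}(n+1)$ yields $n_{<d(x,a)}=n=n_{<d(x',a)}$. Hence $T_a(\delta_x)=\nu_A$ and $T_a(\delta_{x'})=\nu_{A'}$ with $A=\delta\text{-}\geod(x,a)\cap S(a,\underline{\delta}(n))$ and $A'=\delta\text{-}\geod(x',a)\cap S(a,\underline{\delta}(n))$ (both nonempty, since $(X,d)$ is geodesic and $0\leq\underline{\delta}(n)<d(x,a)$), and in particular both are supported on $S(a,\underline{\delta}(n))$. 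In case (ii) the same bookkeeping applied to $x'$ --- using $d(x',a)=\underline{\delta}(n)+1$ and $\underline{\delta}(n+1)=\underline{\delta}(n)+5\delta\geq\underline{\delta}(n)+5$ --- gives $n_{<d(x',a)}=n$ and $d(x',a)>4\delta$, so $T_a(\delta_{x'})=\nu_{A'}$ with $A'=\delta\text{-}\geod(x',a)\cap S(a,\underline{\delta}(n))$, while $\delta_x$ is trivially supported on $S(a,\underline{\delta}(n))$ because $d(x,a)=\underline{\delta}(n)$.

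Next I would bound the maximal distance between the supports. In case (i), pick $y\in A$ and $y'\in A'$; both lie on the sphere $S(a,\underline{\delta}(n))$, and $y$ lies in $\delta\text{-}\geod(a,x)$. Using $d(x,x')=1$ and the triangle inequality ($d(x,y')\leq 1+d(x',y')$ and $d(x',a)\leq d(x,a)+1$) one checks that $y'$, which belongs to $\delta\text{-}\geod(a,x')$, also belongs to $(\delta+2)\text{-}\geod(a,x)$. Then Lemma~\ref{sphere-alpha-geodesique}, applied with apex $a$, second endpoint $x$, and the two points $y,y'$ (so $\alpha=\delta+2$), gives $d(y,y')\leq(\delta+2)+\delta=2\delta+2\leq 4\delta$, the last step because $\delta\geq 1$. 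In case (ii), the identity $d(x',x)+d(x,a)=1+\underline{\delta}(n)=d(x',a)$ shows that $x$ lies on a genuine geodesic from $x'$ to $a$, hence $x\in\delta\text{-}\geod(a,x')$; since any $y'\in A'$ also lies in $\delta\text{-}\geod(a,x')$ and $d(a,x)=d(a,y')=\underline{\delta}(n)$, Lemma~\ref{sphere-alpha-geodesique} with $\alpha=\delta$ gives $d(x,y')\leq 2\delta\leq 4\delta$.

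I expect the only genuinely delicate point to be the transfer step in case (i): the measures $T_a(\delta_x)$ and $T_a(\delta_{x'})$ are built from $\delta$-geodesics issued from two \emph{different} basepoints $x\neq x'$, whereas Lemma~\ref{sphere-alpha-geodesique} requires a single basepoint, so one must first absorb the displacement $d(x,x')=1$ --- together with the possible discrepancy of $1$ between $d(x,a)$ and $d(x',a)$ within the annulus --- into the constant, passing from $\delta\text{-}\geod(a,x')$ to $(\delta+2)\text{-}\geod(a,x)$. Everything else amounts to reading the indices off the definitions of $\underline{\delta}$ and $n_{<t}$ and checking that the resulting constants never exceed $4\delta$, which holds precisely because $\delta$ is a nonzero integer.
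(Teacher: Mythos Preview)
Your proof is correct and follows essentially the same route as the paper: the paper also reduces to Lemma~\ref{sphere-alpha-geodesique}, first observing that $\delta\text{-}\geod(x',a)\subset(\delta+2)\text{-}\geod(x,a)$ via the same triangle-inequality computation you give, then applying the lemma with $\alpha=\delta+2$ and concluding with $2\delta+2\leq 4\delta$. Your treatment of case~(ii) --- noting $x\in\geod(x',a)$ and applying the lemma with $\alpha=\delta$ --- spells out what the paper dismisses as evident, and is indeed the natural argument.
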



\definecolor{xdxdff}{rgb}{0.49019607843137253,0.49019607843137253,1.}
\definecolor{qqqqff}{rgb}{0.,0.,1.}
\begin{tikzpicture}[line cap=round,line join=round,>=triangle 45,x=1.0cm,y=1.0cm,scale=0.7]
\clip(2.,2.) rectangle (20.,6.7);
\draw [rotate around={18.434948822922017:(6.,4.)},dash pattern=on 2pt off 2pt] (6.,4.) ellipse (3.236925187912023cm and 0.69114735920737cm);
\draw [shift={(3.,3.)}] plot[domain=-0.5016040541891202:1.4440287916935348,variable=\t]({1.*2.121320343559643*cos(\t r)+0.*2.121320343559643*sin(\t r)},{0.*2.121320343559643*cos(\t r)+1.*2.121320343559643*sin(\t r)});
\draw [shift={(3.,3.)}] plot[domain=-0.693986961537167:1.6710876442118139,variable=\t]({1.*3.4083427057735847*cos(\t r)+0.*3.4083427057735847*sin(\t r)},{0.*3.4083427057735847*cos(\t r)+1.*3.4083427057735847*sin(\t r)});
\draw [shift={(3.,3.)}] plot[domain=0.032292564510163956:1.7514533619708839,variable=\t]({1.*4.691168648876256*cos(\t r)+0.*4.691168648876256*sin(\t r)},{0.*4.691168648876256*cos(\t r)+1.*4.691168648876256*sin(\t r)});
\draw [shift={(3.,3.)}] plot[domain=-0.7201157591188156:1.809776775654587,variable=\t]({1.*5.853170081246572*cos(\t r)+0.*5.853170081246572*sin(\t r)},{0.*5.853170081246572*cos(\t r)+1.*5.853170081246572*sin(\t r)});
\draw [shift={(3.,3.)}] plot[domain=-0.41850890555333464:1.9107266962645173,variable=\t]({1.*7.1358531375022*cos(\t r)+0.*7.1358531375022*sin(\t r)},{0.*7.1358531375022*cos(\t r)+1.*7.1358531375022*sin(\t r)});
\draw (8,4.964860102005301) node[anchor=north west] {$x'$};
\draw (5.8,4.5) node[anchor=north west] {$T_a(\delta_{x'})$};
\draw (2.4,3) node[anchor=north west] {$a$};
\draw [rotate around={30.76271953423893:(5.514793701027475,4.496901012516354)},dash pattern=on 2pt off 2pt] (5.514793701027475,4.496901012516354) ellipse (3.0597393526665777cm and 0.8928073175419783cm);
\draw [shift={(3.,3.)},line width=2.8pt]  plot[domain=0.19054808344889865:0.4529530253443871,variable=\t]({1.*4.691168648876257*cos(\t r)+0.*4.691168648876257*sin(\t r)},{0.*4.691168648876257*cos(\t r)+1.*4.691168648876257*sin(\t r)});
\draw [shift={(3.,3.)},line width=2.8pt]  plot[domain=0.3782431708270035:0.6955783146530887,variable=\t]({1.*4.691168648876254*cos(\t r)+0.*4.691168648876254*sin(\t r)},{0.*4.691168648876254*cos(\t r)+1.*4.691168648876254*sin(\t r)});
\draw (7.45,6.182603068669704) node[anchor=north west] {$x$};
\draw [shift={(3.,3.)},line width=4.4pt]  plot[domain=0.3782431708270035:0.4529530253443871,variable=\t]({1.*4.691168648876254*cos(\t r)+0.*4.691168648876254*sin(\t r)},{0.*4.691168648876254*cos(\t r)+1.*4.691168648876254*sin(\t r)});
\draw (5.4,5.7) node[anchor=north west] {$T_a(\delta_{x})$};
\draw (6.5,3.2) node[anchor=north west] {$S(a,\underline{\delta}(n))$};
\draw [rotate around={18.606663931722988:(15.48,4.01)},dash pattern=on 2pt off 2pt] (15.48,4.01) ellipse (3.2415086438012986cm and 0.6980532127556973cm);
\draw [shift={(12.48,3.)}] plot[domain=-0.4932999321890792:1.445246589686371,variable=\t]({1.*2.11177650332605*cos(\t r)+0.*2.11177650332605*sin(\t r)},{0.*2.11177650332605*cos(\t r)+1.*2.11177650332605*sin(\t r)});
\draw [shift={(12.48,3.)}] plot[domain=-0.6894592905441614:1.6704649792860589,variable=\t]({1.*3.3955853692699285*cos(\t r)+0.*3.3955853692699285*sin(\t r)},{0.*3.3955853692699285*cos(\t r)+1.*3.3955853692699285*sin(\t r)});
\draw [shift={(12.48,3.)}] plot[domain=0.057128773367495854:1.7506498265873751,variable=\t]({1.*4.694965972711289*cos(\t r)+0.*4.694965972711289*sin(\t r)},{0.*4.694965972711289*cos(\t r)+1.*4.694965972711289*sin(\t r)});
\draw [shift={(12.48,3.)}] plot[domain=-0.7175413405411444:1.8089433284853662,variable=\t]({1.*5.84*cos(\t r)+0.*5.84*sin(\t r)},{0.*5.84*cos(\t r)+1.*5.84*sin(\t r)});
\draw [shift={(12.48,3.)}] plot[domain=-0.41594513124665333:1.9097709256768987,variable=\t]({1.*7.127748592648313*cos(\t r)+0.*7.127748592648313*sin(\t r)},{0.*7.127748592648313*cos(\t r)+1.*7.127748592648313*sin(\t r)});
\draw (17.45,4.984823101458815) node[anchor=north west] {$x'$};
\draw (15.3,4.805156106377182) node[anchor=north west] {$T_a(\delta_{x'})$};
\draw (11.9,3) node[anchor=north west] {$a$};
\draw [shift={(12.48,3.)},line width=2.8pt]  plot[domain=0.19227916032066736:0.4572159365206113,variable=\t]({1.*4.694965972711289*cos(\t r)+0.*4.694965972711289*sin(\t r)},{0.*4.694965972711289*cos(\t r)+1.*4.694965972711289*sin(\t r)});
\draw (16.45,5.9) node[anchor=north west] {$x$};
\draw (15.55,5.9) node[anchor=north west] {$\delta_{x}$};
\draw (16,3.25) node[anchor=north west] {$S(a,\underline{\delta}(n))$};
\begin{scriptsize}
\draw [fill=qqqqff] (3.,3.) circle (3.5pt);
\draw [fill=xdxdff] (7.4,5.78) circle (3.5pt);
\draw [fill=xdxdff] (7.94,4.46) circle (3.5pt);
\draw [fill=qqqqff] (12.48,3.) circle (3.5pt);
\draw [fill=xdxdff] (16.38,5.56) circle (3.5pt);
\draw [fill=xdxdff] (17.42,4.48) circle (3.5pt);
\end{scriptsize}
\end{tikzpicture}

Le seul point qui n'est pas complètement évident est de montrer, dans le premier cas, que la distance maximale entre les supports de $T_{a}(\delta_{x})$ et $T_{a}(\delta_{x'})$  est inférieure à $4\delta$.
Remarquons que $\delta\text{-}\geod(x',a) \subset (\delta+2)\text{-}\geod(x,a)$ puisqu'en notant $y'$ un élément du premier ensemble, on a
$$d(a,y')+d(y',x) \leq \underbrace{d(a,y') + d(y',x')}_{\leq d(a,x') + \delta} + d(x',x) \leq d(a,x) + \delta + 2.$$
On applique alors le lemme \ref{sphere-alpha-geodesique} (avec $\alpha=\delta+2, y\in \mathrm{supp}(T_{a}(\delta_{x}))$, $y'\in \mathrm{supp}(T_{a}(\delta_{x'}))$ en notant que $d(a,y)=d(a,y')=\underline{\delta}(n)$) et on conclut en remarquant que $2\delta+2\leq 4\delta$ puisque $\delta$ est un entier non nul.
\cqfd

\vspace{0.2cm}

Soit $K$ un entier naturel tel que tout point de $X$ a au plus $K$ voisins (autrement dit les sphères de rayon~1 sont de cardinal au plus $K$).
Nous démontrons maintenant le lemme-clé qui nous permettra de conclure.

\begin{lem}\label{lem-n}
Soit $n$ un entier naturel non nul et $x,x'$ deux points de $S(a,\underline{\delta}(n))$.
Si $d(x,x')\leq 4\delta$, alors les supports des mesures $T_{a}(\delta_{x})$ et $T_{a}(\delta_{x'})$ sont contenus dans la sphère $S(a,\underline{\delta}(n-1))$ et ont des cardinaux bornés par une constante $C(\delta,K)$ ne dépendant que de $\delta$ et $K$.
De plus, ces supports ont une distance maximale entre eux inférieure à $4\delta$, ainsi qu'une intersection commune non triviale.
\end{lem}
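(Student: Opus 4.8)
\emph{Proof plan.} The idea is to make both supports completely explicit, then to read off the four assertions, the only nontrivial one being the existence of a common point.

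First I would note that, since $n\geq 1$ and $\delta$ is a nonzero integer, $d(x,a)=\underline{\delta}(n)=(4+5n)\delta\geq 9\delta>4\delta$, so the second clause of the definition of $T_a$ applies to both $\delta_x$ and $\delta_{x'}$, and $n_{<\underline{\delta}(n)}=n-1$ because $\underline{\delta}$ is strictly increasing. Hence
$$\mathrm{supp}\,T_a(\delta_x)=A:=\delta\text{-}\geod(x,a)\cap S(a,\underline{\delta}(n-1)),\qquad \mathrm{supp}\,T_a(\delta_{x'})=A':=\delta\text{-}\geod(x',a)\cap S(a,\underline{\delta}(n-1)),$$
which already yields the first assertion. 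Applying Lemma~\ref{sphere-alpha-geodesique} with $\alpha=\delta$ to two points of $A$ (they lie on a common sphere centred at $a$ and both in $\delta\text{-}\geod(a,x)$) gives $\mathrm{diam}(A)\leq 2\delta$, and likewise $\mathrm{diam}(A')\leq 2\delta$. Since the metric comes from a graph in which every vertex has at most $K$ neighbours, a ball of radius $2\delta$ contains at most $C(\delta,K):=1+K+\cdots+K^{2\delta}$ points, so $\sharp A,\sharp A'\leq C(\delta,K)$.

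The heart of the argument is to exhibit a point of $A\cap A'$. Using that $X$ is geodesic, I would choose $z$ on a geodesic from $a$ to $x$ with $d(a,z)=\underline{\delta}(n-1)$; then $d(z,x)=\underline{\delta}(n)-\underline{\delta}(n-1)=5\delta$ and $z\in\geod(a,x)\cap S(a,\underline{\delta}(n-1))\subset A$. To see that $z\in A'$, apply the four-point inequality to $a,z,x,x'$: the pairing sum $d(a,z)+d(x,x')\leq\underline{\delta}(n-1)+4\delta=\underline{\delta}(n)-\delta$ is the smallest of the three, whereas $d(a,x)+d(z,x')=\underline{\delta}(n)+d(z,x')\geq\underline{\delta}(n)+5\delta=d(a,x')+d(z,x)$, since $d(z,x')\geq d(a,x')-d(a,z)=5\delta$. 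Hyperbolicity then forces
$$d(a,x)+d(z,x')\ \leq\ \big(d(a,x')+d(z,x)\big)+\delta\ =\ \underline{\delta}(n)+6\delta,$$
so $d(z,x')\leq 6\delta$, and therefore $d(x',z)+d(z,a)\leq 6\delta+\underline{\delta}(n-1)=\underline{\delta}(n)+\delta=d(x',a)+\delta$, i.e.\ $z\in\delta\text{-}\geod(x',a)\cap S(a,\underline{\delta}(n-1))=A'$. Thus $z\in A\cap A'$. The maximal-distance bound then follows immediately: $z$ belongs to both sets and each has diameter at most $2\delta$, so $d(y,y')\leq d(y,z)+d(z,y')\leq 4\delta$ for all $y\in A$, $y'\in A'$.

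I expect the only genuine obstacle to be the step $z\in A'$, i.e.\ checking that the point of an $a$-to-$x$ geodesic at level $\underline{\delta}(n-1)$ still lies on a $\delta$-geodesic from $a$ to $x'$; this is precisely where the hyperbolicity constant is spent, and where the choice $\underline{\delta}(n)-\underline{\delta}(n-1)=5\delta$ for the thickness of the annuli is needed (the resulting slack $6\delta-5\delta=\delta$ is exactly right). The remaining points are bookkeeping with the definitions, Lemma~\ref{sphere-alpha-geodesique}, and uniform local finiteness.
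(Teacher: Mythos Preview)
Your proof is correct. It differs from the paper's in two respects worth noting. First, for the common point you apply the four-point inequality directly to $a,z,x,x'$, whereas the paper invokes Lemma~\ref{chemin-alpha-geodesique} (with $\alpha=8\delta$, $\beta=0$, using that $d(x,x')\leq 4\delta$ implies $x'\in 8\delta\text{-}\geod(x,a)$); your argument is more self-contained, but the paper's route makes visible that the same lemma drives several steps of the construction. Second, the paper proves the maximal-distance bound independently of the common point, again via Lemma~\ref{chemin-alpha-geodesique} (with $\beta=\delta$) followed by Lemma~\ref{sphere-alpha-geodesique}, obtaining the sharper estimate $d(y,y')\leq 3\delta$; you instead deduce the bound \emph{a posteriori} from the common point and the diameter estimates, which is slicker but only gives $4\delta$. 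Since the statement only asks for $4\delta$, nothing is lost.
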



\begin{center}
\definecolor{xdxdff}{rgb}{0.49019607843137253,0.49019607843137253,1.}
\definecolor{qqqqff}{rgb}{0.,0.,1.}
\begin{tikzpicture}[line cap=round,line join=round,>=triangle 45,x=1.0cm,y=1.0cm,scale=0.8]
\clip(2.5,2.5) rectangle (10.5,6.5);
\draw [rotate around={18.434948822922017:(6.,4.)},dash pattern=on 2pt off 2pt] (6.,4.) ellipse (3.236925187912023cm and 0.69114735920737cm);
\draw [shift={(3.,3.)}] plot[domain=-0.5016040541891202:1.4440287916935348,variable=\t]({1.*2.121320343559643*cos(\t r)+0.*2.121320343559643*sin(\t r)},{0.*2.121320343559643*cos(\t r)+1.*2.121320343559643*sin(\t r)});
\draw [shift={(3.,3.)}] plot[domain=-0.693986961537167:1.6710876442118139,variable=\t]({1.*3.4083427057735847*cos(\t r)+0.*3.4083427057735847*sin(\t r)},{0.*3.4083427057735847*cos(\t r)+1.*3.4083427057735847*sin(\t r)});
\draw [shift={(3.,3.)}] plot[domain=-0.6366577596779681:1.7514533619708839,variable=\t]({1.*4.676109494013158*cos(\t r)+0.*4.676109494013158*sin(\t r)},{0.*4.676109494013158*cos(\t r)+1.*4.676109494013158*sin(\t r)});
\draw [shift={(3.,3.)}] plot[domain=0.1:1.809776775654587,variable=\t]({1.*5.853170081246572*cos(\t r)+0.*5.853170081246572*sin(\t r)},{0.*5.853170081246572*cos(\t r)+1.*5.853170081246572*sin(\t r)});
\draw [shift={(3.,3.)}] plot[domain=-0.41850890555333464:1.9107266962645173,variable=\t]({1.*7.1358531375022*cos(\t r)+0.*7.1358531375022*sin(\t r)},{0.*7.1358531375022*cos(\t r)+1.*7.1358531375022*sin(\t r)});
\draw (8.12,4.85) node[anchor=north west] {$x'$};
\draw (6.1,4.4) node[anchor=north west] {$T_a(\delta_{x'})$};
\draw (2.55,3) node[anchor=north west] {$a$};
\draw [rotate around={30.76271953423893:(5.514793701027475,4.496901012516354)},dash pattern=on 2pt off 2pt] (5.514793701027475,4.496901012516354) ellipse (3.0597393526665777cm and 0.8928073175419783cm);
\draw (7.8,3.6) node[anchor=north west] {$S(a,\underline{\delta}(n))$};
\draw [shift={(3.,3.)},line width=2.8pt]  plot[domain=0.18975780083566285:0.45374330795762396,variable=\t]({1.*4.676109494013157*cos(\t r)+0.*4.676109494013157*sin(\t r)},{0.*4.676109494013157*cos(\t r)+1.*4.676109494013157*sin(\t r)});
\draw [shift={(3.,3.)},line width=2.8pt]  plot[domain=0.3770641327288447:0.6967573527512481,variable=\t]({1.*4.676109494013162*cos(\t r)+0.*4.676109494013162*sin(\t r)},{0.*4.676109494013162*cos(\t r)+1.*4.676109494013162*sin(\t r)});
\draw (7.6,6) node[anchor=north west] {$x$};
\draw [shift={(3.,3.)},line width=4.4pt]  plot[domain=0.3770641327288447:0.45374330795762396,variable=\t]({1.*4.676109494013162*cos(\t r)+0.*4.676109494013162*sin(\t r)},{0.*4.676109494013162*cos(\t r)+1.*4.676109494013162*sin(\t r)});
\draw (5.6,5.7) node[anchor=north west] {$T_a(\delta_{x})$};
\begin{scriptsize}
\draw [fill=qqqqff] (3.,3.) circle (3pt);
\draw [fill=xdxdff] (8.552804696727591,4.850934898909197) circle (3pt);
\draw [fill=xdxdff] (8.029587402054963,5.993802025032718) circle (3pt);
\end{scriptsize}
\end{tikzpicture}
\end{center}

Montrons qu'il existe une constante $C(\delta,K)$ qui majore le cardinal du support de $T_{a}(\delta_{x})$.
Or, pour tous $a,x$ de $X$, le support de $T_{a}(\delta_{x})$ a un diamètre inférieur ou égal à $2\delta$ : c'est évident si $d(x,a)\leq 4\delta$ et sinon, puisque le support de $T_{a}(\delta_{x})$ est inclus dans $\delta\text{-}\geod(x,a)\cap S(a,\underline{\delta}(n))$, une application du lemme~\ref{sphere-alpha-geodesique} avec $\alpha=\delta$ et $y,y'$ dans le support de $T_{a}(\delta_{x})$ permet de conclure. 

Soit $y$ et $y'$ respectivement dans le support de $T_{a}(\delta_{x})$ et $T_{a}(\delta_{x'})$.
Puisque $d(x,x')\leq 4\delta$, on en déduit que $x'$ appartient à $8\delta\text{-}\geod(x,a)$.
D'autre part, $y'$ appartient à $\delta\text{-}\geod(x',a)$ et $d(x',y')\geq 5\delta \geq (8\delta+\delta)/2$.
En appliquant le lemme~\ref{chemin-alpha-geodesique} à $a,x,x',y'$ (au lieu de $a,x,y,z$) avec $\alpha=8\delta$ et $\beta=\delta$, on en déduit que $y'$ appartient à $2\delta\text{-}\geod(x,a)$.
Le lemme~\ref{sphere-alpha-geodesique} appliqué à $a,x,y,y'$ avec $\alpha=2\delta$ montre alors $d(y,y')\leq 3\delta$.
La distance maximale entre les supports de $T_{a}(\delta_{x})$ et $T_{a}(\delta_{x'})$ est donc inférieure à $3\delta$ (et {\it a fortiori} $\leq 4\delta$).

Enfin montrons que l'intersection des supports de $T_{a}(\delta_{x})$ et $T_{a}(\delta_{x'})$ est non vide.
Pour cela, il suffit de voir que si $y'$ appartient à $\geod(x',a) \cap S(a,\underline{\delta}(n-1))$ (et donc au support de $T_{a}(\delta_{x'})$), alors $y'$ appartient à $\delta\text{-}\geod(x,a)$ (et donc au support de $T_{a}(\delta_{x})$).
C'est exactement le même argument que précédemment, c'est-à-dire une application du lemme~\ref{chemin-alpha-geodesique} avec $a,x,x',y $ (au lieu de $a,x,y,z$), $\alpha=8\delta$ et $\beta=0$.
\cqfd

\begin{lem}\label{dernier-lemme}
Il existe un réel $\epsilon$ strictement positif tel que pour tout entier naturel~$n$ et pour tous points $a$ de $X$ et $x,x'$ de $S(a,\underline{\delta}(n))$, si $d(x,x')\leq 4\delta$, alors $\|\mu_x(a)-\mu_{x'}(a)\|_{1}\leq 2\text{e}^{-\epsilon n}$.
\end{lem}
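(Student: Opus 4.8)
The plan is to convert Lemma~\ref{lem-n} into a geometric contraction estimate for the iterates of $T_a$. Fix $a$ and put $C:=C(\delta,K)$ as in Lemma~\ref{lem-n}; I will get the statement with any $\epsilon>0$ such that $\text{e}^{-\epsilon}\geq 1-\frac1C$. For $n=0$ there is nothing to do, since then $\mu_x(a)=\delta_x$ and $\|\delta_x-\delta_{x'}\|_1\leq 2$, so assume $n\geq 1$. Recall from the discussion right after the definition of $T_a$ that, because $x,x'\in S(a,\underline{\delta}(n))$, one has $\mu_x(a)=T_a^{n}(\delta_x)$ and $\mu_{x'}(a)=T_a^{n}(\delta_{x'})$, and that $T_a$ carries any probability measure supported on $S(a,\underline{\delta}(m))$ to one supported on $S(a,\underline{\delta}(m-1))$ as soon as $m\geq 1$. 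Call a pair $(\rho,\rho')$ of probability measures on $X$ \emph{$m$-admissible} if both are supported on $S(a,\underline{\delta}(m))$ and $\mathrm{supp}\,\rho\cup\mathrm{supp}\,\rho'$ has diameter at most $4\delta$. The pair $(\delta_x,\delta_{x'})$ is $n$-admissible by hypothesis, and I claim $T_a$ sends $m$-admissible pairs to $(m-1)$-admissible pairs for $m\geq 1$: the set $\mathrm{supp}(T_a\rho)\cup\mathrm{supp}(T_a\rho')$ is the union of the $\mathrm{supp}(T_a\delta_w)$ for $w$ running over $\mathrm{supp}\,\rho\cup\mathrm{supp}\,\rho'$, any two such $w$ lie on $S(a,\underline{\delta}(m))$ at distance $\leq 4\delta$, and Lemma~\ref{lem-n} (together with the bound $\mathrm{diam}\,\mathrm{supp}(T_a\delta_w)\leq 2\delta$ proved inside it) says any two of these sets are at maximal distance $\leq 4\delta$ from one another. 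Hence $\big(T_a^{k}(\delta_x),T_a^{k}(\delta_{x'})\big)$ is $(n-k)$-admissible for $0\leq k\leq n$, and it is enough to show $\|T_a^{n}(\delta_x)-T_a^{n}(\delta_{x'})\|_1\leq 2\big(1-\frac1C\big)^{n}$.

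First I would prove the one-step bound: if $(\rho,\rho')$ is $m$-admissible with $m\geq 1$, then $\|T_a\rho-T_a\rho'\|_1\leq 2\big(1-\frac1C\big)$. Writing $\rho=\sum_u\rho(u)\delta_u$ and $\rho'=\sum_v\rho'(v)\delta_v$, the product $\rho\otimes\rho'$ is a coupling of $\rho$ and $\rho'$, so
$$T_a\rho-T_a\rho'=\sum_{u,v}\rho(u)\,\rho'(v)\,\big(T_a\delta_u-T_a\delta_v\big),$$
whence, by the triangle inequality and $\sum_{u,v}\rho(u)\rho'(v)=1$, $\|T_a\rho-T_a\rho'\|_1\leq\max_{u,v}\|T_a\delta_u-T_a\delta_v\|_1$, the maximum over $u\in\mathrm{supp}\,\rho$, $v\in\mathrm{supp}\,\rho'$. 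Any such $u,v$ lie on $S(a,\underline{\delta}(m))$ with $d(u,v)\leq 4\delta$, so Lemma~\ref{lem-n} applies: $T_a\delta_u$ and $T_a\delta_v$ are uniform probability measures on sets of at most $C$ points sharing a common point $w$, so $T_a\delta_u(\{w\})\geq\frac1C$ and $T_a\delta_v(\{w\})\geq\frac1C$, and therefore, using $\|\mu-\nu\|_1=2-2\,\|\mu\wedge\nu\|$ for probability measures (where $\mu\wedge\nu$ is the largest measure below both and $\|\cdot\|$ its total mass), $\|T_a\delta_u-T_a\delta_v\|_1\leq 2-2\min\!\big(T_a\delta_u(\{w\}),T_a\delta_v(\{w\})\big)\leq 2-\frac2C$.

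Finally I would run a Doeblin-type splitting to upgrade this to a contraction. Put $m_k=T_a^{k}(\delta_x)$, $m'_k=T_a^{k}(\delta_{x'})$, and write $m_k\wedge m'_k=\lambda_k\theta_k$ with $\theta_k$ a probability measure and $\lambda_k=\|m_k\wedge m'_k\|\in[0,1]$, so that $m_k=\lambda_k\theta_k+(1-\lambda_k)r_k$ and $m'_k=\lambda_k\theta_k+(1-\lambda_k)r'_k$ for probability measures $r_k,r'_k$ (taking $r_k=r'_k=\theta_k$ when $\lambda_k=1$), with $\|m_k-m'_k\|_1=2(1-\lambda_k)$. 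Since $\mathrm{supp}\,r_k\subseteq\mathrm{supp}\,m_k$ and $\mathrm{supp}\,r'_k\subseteq\mathrm{supp}\,m'_k$, the pair $(r_k,r'_k)$ is again $(n-k)$-admissible, so the one-step bound gives $\|T_ar_k-T_ar'_k\|_1\leq 2\big(1-\frac1C\big)$ whenever $n-k\geq 1$. Applying $T_a$ to the two splittings and subtracting, the common part $\lambda_k T_a\theta_k$ cancels and $m_{k+1}-m'_{k+1}=(1-\lambda_k)(T_ar_k-T_ar'_k)$, hence
$$\|m_{k+1}-m'_{k+1}\|_1\leq(1-\lambda_k)\cdot 2\Big(1-\frac1C\Big)=\Big(1-\frac1C\Big)\,\|m_k-m'_k\|_1.$$
Iterating from $\|m_0-m'_0\|_1\leq 2$ yields $\|m_n-m'_n\|_1\leq 2\big(1-\frac1C\big)^{n}\leq 2\,\text{e}^{-\epsilon n}$, which is the claim. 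The one genuinely delicate point — precisely what the remark on $\Z\times\Z/2\Z$ warns against, and the reason a single application of $T_a$ does not suffice — is that the residual measures $r_k,r'_k$ must remain in the admissible class so that Lemma~\ref{lem-n} can be reapplied at the next step; once that stability is granted, everything else is routine bookkeeping with total variation and the coupling inequality.
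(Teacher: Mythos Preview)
Your proof is correct and follows essentially the same strategy as the paper: establish via Lemma~\ref{lem-n} that one application of $T_a$ contracts the $\ell^1$-distance by a factor $1-1/C$ on pairs of measures whose supports lie on the same sphere $S(a,\underline{\delta}(m))$ and within maximal distance $4\delta$ of each other, check that this class of pairs is preserved under $T_a$, and iterate.

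The only difference is in packaging. The paper obtains the contraction in one stroke by writing the signed measure $\mu-\nu$ as $\sum_i\kappa_i(\delta_{x_i}-\delta_{y_i})$ with $\sum_i\kappa_i=\tfrac12\|\mu-\nu\|_1$ and $x_i\in\mathrm{supp}\,\mu$, $y_i\in\mathrm{supp}\,\nu$ (so $d(x_i,y_i)\leq 4\delta$), then applying the Dirac-level bound termwise. You instead split this into two moves: a product coupling giving the crude bound $\|T_a\rho-T_a\rho'\|_1\leq 2(1-1/C)$ for admissible pairs, followed by a Doeblin splitting that cancels the common mass $\lambda_k\theta_k$ and applies the crude bound to the residuals $(r_k,r'_k)$. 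The two arguments are equivalent---your Doeblin extraction of the common part is exactly what makes the paper's decomposition carry total mass $\tfrac12\|\mu-\nu\|_1$ rather than $1$---but the paper's formulation is a line shorter, while yours makes the Markov-chain/coupling structure more explicit and spells out the admissibility-preservation step that the paper leaves implicit.
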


Remarquons que pour deux mesures de probabilité $\mu$ et $\nu$ à support fini sur $X$, $\frac{1}{2} \|\mu-\nu\|_{1}$ est la masse de la partie qui n'est pas commune aux deux mesures.
Par ailleurs, en écrivant $\mu-\nu=\sum_{i}\kappa_{i}(\delta_{x_{i}}-\delta_{y_{i}})$ avec $\kappa_{i}$ des réels strictement positifs, $x_i, y_i$ des points respectivement dans les supports de $\mu$ et $\nu$ tels que l'ensemble des $x_i$ soit disjoint de l'ensemble des $y_i$, on a alors $\sum_{i} \kappa_{i}=\frac{1}{2} \|\mu-\nu\|_{1}$.

Pour tout entier naturel $n$, si $x, x'$ sont des éléments de $S(a,\underline{\delta}(n))$ tels que $d(x,x')\leq 4\delta$, d'après le lemme~\ref{lem-n}, on a $\frac{1}{2} \|T_{a}(\delta_{x})-T_{a}(\delta_{x'})\|_{1}\leq (1-1/C)$, pour une certaine constante $C = C(\delta,K)$ qui, de plus, majore le cardinal du support de $T_{a}(\delta_{x})$.
Cette inégalité s'étend au cas de deux mesures $\mu$ et $\nu$ dont les cardinaux des supports sont majorés par $C$ et dont la distance entre les supports est inférieure à $4\delta$.
En effet, avec les notations précédentes, on a
$$\|T_{a}(\mu)-T_{a}(\nu)\|_{1} \leq \sum_{i}\kappa_{i}\,\|T_{a}(\delta_{x_{i}})-T_{a}(\delta_{y_{i}})\|_{1} \leq (1-1/C)\cdot \|\mu-\nu\|_{1}.$$

D'où la décroissance exponentielle annoncée avec $\epsilon = -\ln(1-1/C) >0$.
\cqfd

Le deuxième point du théorème~\ref{enonce-bon} est une conséquence des lemmes~\ref{lem3} et \ref{dernier-lemme}.
Ceci termine la preuve du théorème~\ref{enonce-bon}.
\end{proof}


\section{Généralisation du théorème principal}

Pour terminer, nous donnons une version plus générale (théorème~\ref{enonce-ppal}) du théorème principal~\ref{enonce-bon}.
Ce sera essentiellement une conséquence de la proposition suivante.

\begin{prop}\label{bon-espace-hyp-discret}
Soit $\delta$ un réel positif et $(X,d)$ un espace métrique $\delta$-hyperbolique, faiblement $\delta$-géodésique, uniformément localement fini, et muni d'une action isométrique d'un groupe $G$.
La fonction 
$$d'(x,y)=\min\{k\in \N ; \exists\  x_0,\cdots,x_k,$$
$$\text{tels que \ }x_0=x, \ x_k=y \text{\ et \ }\forall
j\in [\![0,\cdots,k-1]\!], \ d(x_j,x_{j+1})\leq \delta+1\}.$$
\noindent
est une distance quasi-isométrique à $d$.
De plus, l'espace métrique $(X,d')$ est un bon espace hyperbolique discret muni d'une action isométrique du groupe $G$.
\end{prop}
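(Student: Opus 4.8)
The plan is to verify the claims about $d'$ one at a time. First I would check that $d'$ is genuinely a distance. Symmetry is immediate since the condition on chains $x_0,\dots,x_k$ is symmetric, and $d'(x,y)=0$ iff $x=y$ because a chain of length $0$ forces $x=y$ while any $x\neq y$ with $d(x,y)\leq\delta+1$ already gives $d'(x,y)=1$ (one must note that $d'$ is finite everywhere, which uses weak $\delta$-geodesicity of $(X,d)$: from $x$ one can always step towards $y$ by increments of at most $\delta+1$, so some finite chain exists). The triangle inequality is clear by concatenation of chains: a chain from $x$ to $y$ of length $d'(x,y)$ followed by one from $y$ to $z$ of length $d'(y,z)$ is a chain from $x$ to $z$.

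Next I would establish that $d'$ is quasi-isometric to $d$, which amounts to a two-sided linear comparison $\alpha^{-1}d'-\beta \leq d \leq \alpha d'+\beta$ for suitable constants (surjectivity-type condition (2) of quasi-isometry is trivial since $d'$ and $d$ live on the same set $X$, with the identity map). For the upper bound on $d$: if $x_0,\dots,x_k$ is an optimal $d'$-chain then the triangle inequality for $d$ gives $d(x,y)\leq \sum_{j} d(x_j,x_{j+1}) \leq k(\delta+1) = (\delta+1)d'(x,y)$. For the lower bound: using weak $\delta$-geodesicity of $(X,d)$, given $x,y$ I can build a chain from $x$ to $y$ whose consecutive $d$-distances are at most $\delta+1$ and whose length is controlled by $d(x,y)$ — concretely, stepping from $x$ towards $y$ by jumps of size roughly $1$ (allowed since $(X,d)$ is weakly $\delta$-geodesic, so for $s\in[0,d(x,y)+\delta]$ there is $z$ with $d(x,z)\leq s$ and $d(z,y)\leq d(x,y)-s+\delta$), one gets a chain of length $O(d(x,y)+\delta)$, hence $d'(x,y)\leq A\, d(x,y)+B$ for explicit $A,B$ depending only on $\delta$. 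Combining the two bounds yields the quasi-isometry.

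It remains to see that $(X,d')$ is a \emph{bon espace hyperbolique discret}. By construction $d'$ takes values in $\N$ and comes from a graph structure, namely the graph on vertex set $X$ whose edges join pairs $x,x'$ with $0<d(x,x')\leq\delta+1$: indeed $d'$ is exactly the path metric of that graph. The graph is connected precisely because $d'$ is finite-valued, which we checked above. Uniform local finiteness of $(X,d')$ follows from that of $(X,d)$: a $d'$-ball of radius $r$ is contained in the $d$-ball of radius $r(\delta+1)$, which has bounded cardinality by hypothesis; equivalently each vertex has at most $K$ neighbours where $K$ bounds the cardinality of $d$-balls of radius $\delta+1$. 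Hyperbolicity of $(X,d')$ then follows from hyperbolicity of $(X,d)$ together with the fact, recalled in the excerpt (see the discussion after the definition of quasi-isometry, \cite{harpehyper}, thm.~29, chap.~1), that hyperbolicity is a quasi-isometry invariant among weakly geodesic spaces — here $(X,d)$ is weakly $\delta$-geodesic by hypothesis and $(X,d')$ is genuinely geodesic (it is a graph metric), and the two are quasi-isometric by the previous step. Finally, since $d$ is $G$-invariant and $d'$ is defined purely in terms of $d$, the metric $d'$ is again $G$-invariant, so the $G$-action on $X$ is isometric for $d'$ as well.

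The routine parts are the distance axioms and the local finiteness; the one step that carries real content is the lower comparison $d'\leq A\,d+B$, where one must invoke weak $\delta$-geodesicity of $(X,d)$ to manufacture chains with small steps, and then the appeal to quasi-isometry invariance of hyperbolicity to transport the Gromov inequality from $d$ to $d'$. That invariance is the genuinely non-trivial ingredient, but it is quoted from the literature in the excerpt, so in practice the main thing to get right is the explicit construction of the short-step chain and the bookkeeping of constants.
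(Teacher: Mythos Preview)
Your proposal is correct and follows essentially the same route as the paper: both establish $d\leq(\delta+1)d'$ by the triangle inequality, use weak $\delta$-geodesicity to bound $d'$ above by a linear function of $d$, and then invoke quasi-isometry invariance of hyperbolicity for weakly geodesic spaces. The paper is slightly sharper in that it obtains the explicit bound $d'\leq d+1$ by taking $s=\delta+1$ in the weak-geodesic condition (so each step reduces the remaining $d$-distance by at least $1$), whereas your ``jumps of size roughly $1$'' is the wrong step size and would not by itself yield the bound --- but this is a minor wrinkle in an otherwise correct plan.
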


\begin{proof}
Par construction, $d'$ provient de la structure de graphe sur $X$ pour laquelle deux points distincts $x,y$ de $X$ sont voisins si $d(x,y)\leq \delta +1$.
L'inégalité triangulaire assure que $d\leq (\delta+1)\cdot d'$.
Puisque $(X,d)$ est faiblement $\delta$-géodésique, si $x,y$ sont deux points de $(X,d)$ tels que $d(x,y)>1$, alors il existe un point $z$ de~$X$ tel que $d(z,x) \leq \delta+1$ et $d(z,y) \leq d(x,y)+\delta-(\delta+1)=d(x,y)-1$.
On en déduit donc $d'\leq d+1$ : les métriques $d$ et $d'$ sont quasi-isométriques.
L'hyperbolicité de $(X,d')$ résulte de la conservation de l'hyperbolicité par quasi-isométrie pour les espaces faiblement géodésiques.
La démonstration de ce dernier point figure dans~\cite{harpehyper,delzanthyper} pour les espaces géodésiques ; la preuve est essentiellement la même dans le cas des espaces faiblement géodésiques.
On peut aussi invoquer le théorème 3.18 de~\cite{vaisala} ainsi que  la remarque 3.19 de~\cite{vaisala} appliquée à $(X,d)$ et à l'espace total du graphe considéré précédemment.
\end{proof}

\begin{thm}\label{enonce-ppal}
\renewcommand{\theenumi}{\roman{enumi}}
Soit $(X,d)$ un espace hyperbolique faiblement géodésique et uniformément localement fini muni d'une action isométrique d'un groupe $G$.
Alors il existe une application $G$-équivariante $(x,a)\mapsto \mu_x(a)$ de $X\times X$ dans l'espace des mesures de probabilité sur $X$ telle que :
\begin{enumerate}
\item il existe un entier naturel $R$ tel que pour tous $a$ et $x$ dans $X$, le support de $\mu_x(a)$ est contenu dans la boule fermée centrée en $a$ et de rayon $R$ ;
\item il existe $\epsilon >0$ tel que, pour tout entier naturel $k$, il existe $C>0$ tel que pour tous $x,x'$ et $a$ de $X$, si $d(x,x')\leq k$ alors $\|\mu_x(a)-\mu_{x'}(a)\|_{1}\leq C \cdot \text{e}^{-\epsilon d(x,a)}$ ;
\item il existe $\eta>0$ tel que, si la distance $d(x,x')$ est suffisamment grande, alors le nombre de $a$ de $X$ tels que les supports de $\mu_x(a)$ et $\mu_{x'}(a)$ sont disjoints est supérieur à $\eta \cdot d(x,x')$. 
\end{enumerate}
\end{thm}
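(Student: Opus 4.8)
Le plan est de déduire le théorème~\ref{enonce-ppal} du théorème~\ref{enonce-bon} au moyen de la proposition~\ref{bon-espace-hyp-discret}, toute la substance géométrique étant déjà contenue dans ces deux énoncés. On appliquerait d'abord la proposition~\ref{bon-espace-hyp-discret} à $(X,d)$, qui est $\delta$-hyperbolique pour un certain réel positif~$\delta$, faiblement géodésique et uniformément localement fini : on dispose alors d'une distance $d'$ sur $X$, provenant d'une structure de graphe, quasi-isométrique à $d$ avec $d\leq(\delta+1)\cdot d'$ et $d'\leq d+1$, et telle que $(X,d')$ soit un bon espace hyperbolique discret muni d'une action isométrique de $G$. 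Comme $d'$ est à valeurs entières, on peut de plus prendre pour $\delta'$ un \emph{entier} non nul tel que $(X,d')$ soit $\delta'$-hyperbolique. On appliquerait ensuite le théorème~\ref{enonce-bon} à $(X,d')$ et à $\delta'$ : on obtient une application $G$-équivariante $(x,a)\mapsto\mu_x(a)$ de $X\times X$ dans les mesures de probabilité sur $X$ vérifiant les items (i), (ii), (iii) de l'énoncé~\ref{enonce-bon} relativement à $d'$. C'est cette application $\mu$ qui répond à la question ; il ne reste qu'à traduire ces trois propriétés de $d'$ vers $d$.

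Pour l'item (i), le support de $\mu_x(a)$ étant inclus dans la boule $d'$-fermée de centre $a$ et de rayon $4\delta'$, l'inégalité $d\leq(\delta+1)\cdot d'$ le place dans la boule $d$-fermée de centre $a$ et de rayon $R=4(\delta+1)\delta'$. Pour l'item (iii), si $d(x,x')$ est assez grand, il en va de même de $d'(x,x')\geq d(x,x')/(\delta+1)$, de sorte que le nombre de points $a$ pour lesquels les supports de $\mu_x(a)$ et $\mu_{x'}(a)$ sont disjoints est au moins $\eta'\cdot d'(x,x')\geq (\eta'/(\delta+1))\cdot d(x,x')$ ; on prend $\eta=\eta'/(\delta+1)$.

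L'item (ii) demande un peu plus de soin, car le théorème~\ref{enonce-bon} ne fournit l'estimation que pour des points voisins pour $d'$. Notons $\epsilon'>0$ et $C'$ les constantes correspondantes. Étant donné $k\in\N$ et $x,x'$ avec $d(x,x')\leq k$, on a $d'(x,x')\leq k+1$, donc un chemin $x=x_0,\dots,x_m=x'$ avec $m\leq k+1$ et $d'(x_j,x_{j+1})=1$ pour tout $j$. L'inégalité triangulaire pour $\|\cdot\|_1$, puis l'item (ii) de~\ref{enonce-bon} appliqué aux triplets $(x_j,x_{j+1},a)$, donnent
$$\|\mu_x(a)-\mu_{x'}(a)\|_1\leq\sum_{j=0}^{m-1}\|\mu_{x_j}(a)-\mu_{x_{j+1}}(a)\|_1\leq\sum_{j=0}^{m-1}C'\,\text{e}^{-\epsilon'\,d'(x_j,a)}.$$
Comme $d'(x_j,a)\geq d'(x,a)-m\geq d(x,a)/(\delta+1)-(k+1)$, on en déduit
$$\|\mu_x(a)-\mu_{x'}(a)\|_1\leq (k+1)\,C'\,\text{e}^{\epsilon'(k+1)}\,\text{e}^{-\frac{\epsilon'}{\delta+1}\,d(x,a)},$$
et il suffit de poser $\epsilon=\epsilon'/(\delta+1)$, qui ne dépend pas de $k$, et $C=(k+1)\,C'\,\text{e}^{\epsilon'(k+1)}$.

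Le point délicat — au fond, le seul — est précisément cette uniformité en $k$ de la constante $\epsilon$ dans (ii) : il faut s'assurer que la décroissance exponentielle en $d(x,a)$ déjà disponible pour un pas unitaire de $d'$ absorbe à la fois la perte multiplicative (le facteur $m\leq k+1$ de la somme télescopique) et la perte additive (le terme $-(k+1)$ dans l'exposant) produites en décomposant un pas de longueur $\leq k$ pour $d$ en pas unitaires pour $d'$. Le reste n'est qu'un suivi des constantes de quasi-isométrie, toute la géométrie hyperbolique ayant déjà été faite dans le théorème~\ref{enonce-bon} et la proposition~\ref{bon-espace-hyp-discret}, que l'on utilise comme boîtes noires.
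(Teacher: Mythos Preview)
Your proposal is correct and follows exactly the approach of the paper, whose proof consists of the single sentence ``Le théorème~\ref{enonce-ppal} pour $(X,d)$ résulte du théorème~\ref{enonce-bon} ci-dessus appliqué à $(X,d')$ grâce à la proposition~\ref{bon-espace-hyp-discret}.'' You have simply made explicit the routine translation of the three items along the quasi-isometry $d\leftrightarrow d'$, including the telescoping argument for item~(ii), which the paper leaves entirely to the reader.
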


\begin{proof}
Le théorème~\ref{enonce-ppal} pour $(X,d)$ résulte du théorème~\ref{enonce-bon} ci-dessus appliqué à $(X,d')$ grâce à la proposition~\ref{bon-espace-hyp-discret}.
\end{proof}

On en déduit le corollaire suivant qui se démontre {\it mutatis mutandis} comme son analogue le corollaire~\ref{cor-enonce-bon}.

\begin{cor}\cite{yu}
Soit $(X,d)$ un espace hyperbolique faiblement géodésique et uniformément localement fini muni d'une action isométrique propre d'un groupe $G$.
Alors $G$ admet une action affine propre sur un espace $\ell^p(X^{\leq R})$ pour tout $p$ suffisamment grand et un certain entier naturel $R$ donné.
\end{cor}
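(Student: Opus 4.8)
The plan is to argue \emph{mutatis mutandis} as in the proof of Corollary~\ref{cor-enonce-bon}, now feeding in Theorem~\ref{enonce-ppal} in place of Theorem~\ref{enonce-bon}. First I would fix a base point $o\in X$ together with the $G$-equivariant map $(x,a)\mapsto\mu_x(a)$ provided by Theorem~\ref{enonce-ppal}, take $R$ to be the integer of item~(i), and invoke the construction method of Section~1 with $\mathcal{E}^{\circ}=\ell^p(X^{\leq R})$, with $V$ the space of all real functions on $X^{\leq R}$, and with $\xi_{\circ}$ the function $(x,y)\mapsto\mu_o(x)(\{y\})$. Item~(i) — the support of $\mu_o(x)$ lies in $B(x,R)$ — is exactly what makes $\xi_{\circ}$ an element of $V$. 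Letting $G$ act on $V$ through the diagonal action on $X\times X$ and using $G$-equivariance of $\mu$, the associated cocycle rewrites as $c(g)=\xi_{\circ}-g\cdot\xi_{\circ}=\mu_o-\mu_{g\cdot o}$.

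Next I would verify the hypotheses of the construction. For integrability: fixing $g$, item~(ii) applied with $k=\lceil d(o,g\cdot o)\rceil$ yields $C>0$ with $\|\mu_o(x)-\mu_{g\cdot o}(x)\|_1\leq C\,\mathrm{e}^{-\epsilon d(o,x)}$ for every $x$; since $\|v\|_p\leq\|v\|_1$ for $p\geq1$, summing over the finitely many $y$ in $B(x,R)$ and then over $x$ gives $\|c(g)\|_p^p\leq C^p\sum_{x\in X}\mathrm{e}^{-p\epsilon d(o,x)}$, and this series converges as soon as $p$ is large enough, because uniform local finiteness forces $\#S(o,r)$ to grow at most exponentially in $r$. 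For continuity: $c(g)$ depends only on $g\cdot o$, the space $X$ is discrete (in a uniformly locally finite metric space every ball is finite), and $g\mapsto g\cdot o$ is continuous, so $c$ is locally constant, hence continuous. The method of Section~1 then produces an affine isometric action of $G$ on the affine Banach space $\mu_o+\ell^p(X^{\leq R})$, with cocycle $c$.

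It remains to establish properness, i.e.\ $\|c(g)\|_p\to\infty$ as $g\to\infty$; here I would use item~(iii). Applied to the pair $(o,g\cdot o)$, it says that once $d(o,g\cdot o)$ is large there are at least $\eta\,d(o,g\cdot o)$ points $x$ for which the probability measures $\mu_o(x)$ and $\mu_{g\cdot o}(x)$ have disjoint supports; for each such $x$, the most massive atom of $\mu_o(x)$ has mass $\geq(\#B(x,R))^{-1}\geq K_R^{-1}$, where $K_R$ is the uniform bound on $\#B(\cdot,R)$, and it is absent from $\mu_{g\cdot o}(x)$, so $x$ contributes at least $K_R^{-p}$ to $\|c(g)\|_p^p$. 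Hence $\|c(g)\|_p^p\geq K_R^{-p}\,\eta\,d(o,g\cdot o)$ for $d(o,g\cdot o)$ large, and properness of the $G$-action on $X$ makes $d(o,g\cdot o)\to\infty$, so $c$ is proper.

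I do not anticipate a genuine obstacle: each step transcribes the argument for Corollary~\ref{cor-enonce-bon}, and the only points that ask for a word of care — hence the nearest thing to a difficulty — are the passage from the graph-metric estimates used there to the present purely metric setting (harmless, since item~(i) already delivers the support bound as the ball $B(a,R)$ and uniform local finiteness plays the role of the bounded-valence hypothesis), and the book-keeping that the constant in item~(ii) is allowed to depend on $k=d(o,g\cdot o)$, which is legitimate because $g$ is held fixed throughout the estimate of $\|c(g)\|_p$.
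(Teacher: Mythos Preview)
Your proposal is correct and follows exactly the approach the paper indicates: the paper's proof of this corollary consists entirely of the remark that it \og se démontre \textit{mutatis mutandis} comme son analogue le corollaire~\ref{cor-enonce-bon}\fg, and you have faithfully carried out that transcription with Theorem~\ref{enonce-ppal} in place of Theorem~\ref{enonce-bon}. Your handling of the two minor adaptations --- using item~(ii) directly with $k$ depending on $g$, and replacing the $\|\cdot\|_1=2$ lower bound by the atom estimate $K_R^{-p}$ --- is sound, though for the latter you could equally well have noted (as in the paper) that disjoint supports give $\|\mu_o(x)-\mu_{g\cdot o}(x)\|_1=2$, which yields the slightly sharper contribution $2K_R^{1-p}$ per point.
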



\end{document}